%
%
\documentclass{amsart}
\usepackage{amsmath}
\usepackage{amssymb}
\usepackage{epsfig}
\usepackage{graphicx}

\newtheorem{theorem}{Theorem}[section]
\newtheorem{lemma}[theorem]{Lemma}
\newtheorem{proposition}[theorem]{Proposition}

\theoremstyle{definition}
\newtheorem{definition}[theorem]{Definition}

\theoremstyle{remark}




\reversemarginpar

\begin{document}

\title[Modified {N}\"{o}rlund polynomials]
{Modified {N}\"{o}rlund polynomials}

\author[A. Dixit]{Atul Dixit}
\address{Department of Mathematics,
Tulane University, New Orleans, LA 70118}
\email{adixit@tulane.edu}

\author[A. Kabza]{Adam Kabza}
\address{Department of Mathematics,
Tulane University, New Orleans, LA 70118}
\email{akabza@tulane.edu}

\author[V. Moll]{Victor H. Moll}
\address{Department of Mathematics,
Tulane University, New Orleans, LA 70118}
\email{vhm@tulane.edu}

\author[C. Vignat]{Christophe Vignat}
\address{Department of Mathematics,
Tulane University, New Orleans, LA 70118 and \\
Dept. of Physics, Universite Orsay Paris Sud, L.~S.~S./Supelec, France}
\email{cvignat@tulane.edu}

\subjclass{Primary 11B68, 33C45, Secondary 05A40, 65Q10}

\date{\today}

\keywords{N\"{o}rlund polynomials, square hyperbolic secant distribution, logarithmic moments, Barnes zeta function, Chebyshev 
polynomials, Zagier polynomials.}

\begin{abstract}
The modified Bernoulli numbers $B_{n}^{*}$ considered by Zagier are generalized to modified N\"{o}rlund polynomials 
${B_{n}^{(\ell)*}}$. For $\ell\in\mathbb{N}$, an explicit expression for the generating function for these polynomials is 
obtained. Evaluations of some spectacular integrals involving Chebyshev polynomials, and of a finite sum involving 
 integrals of the Hurwitz zeta function are also obtained. New results about the $\ell$-fold convolution of the 
 square hyperbolic secant distribution are obtained, such as a differential-difference equation satisfied by a logarithmic 
 moment and a closed-form expression in terms of the Barnes zeta function. 
\end{abstract}

\maketitle

\newcommand{\nn}{\nonumber}
\newcommand{\ba}{\begin{eqnarray}}
\newcommand{\ea}{\end{eqnarray}}
\newcommand{\ift}{\int_{0}^{\infty}}
\newcommand{\ione}{\int_{0}^{1}}
\newcommand{\ifft}{\int_{- \infty}^{\infty}}
\newcommand{\B}{\mathfrak{B}}
\newcommand{\no}{\noindent}
\newcommand{\realpart}{\mathop{\rm Re}\nolimits}
\newcommand{\imagpart}{\mathop{\rm Im}\nolimits}
\newcommand{\norb}{{\mathcal{B}}_{n}^{(\alpha)} }
\newcommand{\lp}{\log \sqrt{2 \pi}}

\newtheorem{Definition}{\bf Definition}[section]
\newtheorem{Thm}[Definition]{\bf Theorem} 
\newtheorem{Example}[Definition]{\bf Example} 
\newtheorem{Lem}[Definition]{\bf Lemma} 
\newtheorem{Note}[Definition]{\bf Note} 
\newtheorem{Cor}[Definition]{\bf Corollary} 
\newtheorem{corollary}[Definition]{\bf Corollary} 
\newtheorem{note}[Definition]{\bf Note} 
\newtheorem{Prop}[Definition]{\bf Proposition} 
\newtheorem{Problem}[Definition]{\bf Problem} 
\newtheorem{Conj}[Definition]{\bf Conjecture} 
\newtheorem{Remark}[Definition]{\bf Remark} 
\newtheorem{Notation}[Definition]{\bf Notation} 
\numberwithin{equation}{section}

\section{Introduction} \label{sec-intro}
\setcounter{equation}{0}

The Bernoulli numbers,  defined by the generating function 
\begin{equation}
\sum_{n=0}^{\infty} B_{n} \frac{z^{n}}{n!}  = \frac{z}{e^{z}-1},
\label{bn-def}
\end{equation}
were extended by N.~E.~N\"{o}rlund \cite[Ch. 6]{norlund-1924a} to
\begin{equation}
\sum_{n=0}^{\infty} B_{n}^{(\alpha)} \frac{z^{n}}{n!} = \left( \frac{z}{e^{z}-1} \right)^{\alpha}.
\label{genfun-nor1}
\end{equation}
\no
Here $\alpha \in \mathbb{C}$.  The coefficients $B_{n}^{(\alpha)}$ are called the 
\textit{N\"{o}rlund polynomials} (these are indeed polynomials in $\alpha$). The list  
$\{ B_{n}^{(\alpha)}: \, n \geq 0 \}$ begins with 
\begin{equation}
\left\{ 1, \, - \frac{\alpha}{2}, \, \frac{1}{12} \alpha(3 \alpha - 1), \, - \frac{1}{8} \alpha^{2} (\alpha- 1), \, 
\frac{1}{240} \alpha( 15\alpha^{3} - 30 \alpha^{2} + 5 \alpha + 2) \right\}.
\end{equation}

For $\alpha \in \mathbb{N}$, the N\"{o}rlund polynomials are expressed as 
the $\alpha$-fold convolutions of 
Bernoulli numbers. This follows from the recurrence 
\begin{equation}
B_{n}^{(\alpha)} = \sum_{j=0}^{n} \binom{n}{j}B_{j}^{(\alpha-1)}B_{n-j}, \quad \text{ for } \alpha \geq 2,
\end{equation}
\no 
obtained from \eqref{genfun-nor1}, and the initial condition $B_{n}^{(1)} = B_{n}$.

Zagier \cite{zagier-1998a} introduced a modification of the Bernoulli numbers via
\begin{equation}
B_{n}^{*} = \sum_{r=0}^{n} \binom{n+r}{2r} \frac{B_{r}}{n+r}, \quad n \in \mathbb{N},
\label{zag-mod-000}
\end{equation}
\no
and their polynomial version 
\begin{equation}
B_{n}^{*} (x)= \sum_{r=0}^{n} \binom{n+r}{2r} \frac{B_{r}(x)}{n+r},
\end{equation}
\no
was studied in detail in \cite{dixit-2014a}. Here $B_{n}(x)$ is the Bernoulli polynomial with 
the generating function 
\begin{equation}
\sum_{n=0}^{\infty} B_{n}(x) \frac{z^{n}}{n!} = \frac{ze^{xz}}{e^{z}-1},
\end{equation}
\noindent
and so along with \eqref{bn-def}, we have $B_{n} = B_{n}(0)$.

\smallskip
\noindent
In particular, \cite{dixit-2014a} establishes the formula 
\begin{equation}
\label{alphais1}
\sum_{n=1}^{\infty} B_{n}^{*}(x) z^{n}  = - \frac{1}{2} \log z - 
\frac{1}{2} \psi( z + 1/z -1 - x )
\end{equation}
\noindent
for the generating function of the Zagier polynomials $B_{n}^{*}(x)$, viewed as a formal power series. Here 
\begin{equation}
\psi(x) = \frac{\Gamma'(x)}{\Gamma(x)}
\end{equation}
\noindent
is the digamma function.  The special case $x=0$ yields 
\begin{equation}
\label{alphais2}
\sum_{n=1}^{\infty} B_{n}^{*} z^{n}  = - \frac{1}{2} \log z - 
\frac{1}{2} \psi( z + 1/z -1  ).
\end{equation}

In the present work, the N\"{o}rlund polynomials are modified in  a similar way as Zagier's.
These \textit{modified N\"{o}rlund polynomials} are defined here by 
\begin{equation}
B_{n}^{ (\alpha)*} := \sum_{r=0}^{n} \binom{n+r}{2r} \frac{B_{r}^{(\alpha)}}{n+r},\hspace{2mm}n\in\mathbb{N}.
\end{equation}
\noindent
The Zagier modification of the Bernoulli numbers \eqref{zag-mod-000} is  the special case $\alpha = 1$.  For $\alpha \in \mathbb{N}$, the 
main result of this paper is an expression
 for the  generating 
function
\begin{equation}
F_{B^{*}}(z;\alpha) =  \sum_{n=1}^{\infty} B_{n}^{(\alpha)*}z^{n}, 
\end{equation}
\noindent
involving derivatives of the digamma function as given in Theorem \ref{gen-fun-modber-0}. This is a
 generalization of \eqref{alphais2}.

\medskip 

\noindent
\textbf{Notation}.  Standard notation is used throughout the paper.  

\medskip

\noindent
1) The generalized binomial coefficients are defined by $$\begin{displaystyle} \binom{x}{n}  = \frac{1}{n!}x(x-1) \cdots (x-n+1) \end{displaystyle},$$ for 
$x \in \mathbb{R}$ and $n \in \mathbb{N}$.
 
 \smallskip

\noindent
2) The harmonic numbers are defined by $$\begin{displaystyle} H_{n} = 1 + \frac{1}{2} + \cdots + \frac{1}{n} \end{displaystyle}. $$

\smallskip

\noindent
3) The gamma function is defined by the integral representation 
$$ \begin{displaystyle} \Gamma(z) = \int_{0}^{\infty} e^{-t}t^{z-1} \, dt
\end{displaystyle}$$ 
\noindent
for $\realpart{z}>0$ and extended by analytic continuation. It satisfies the functional equation $\Gamma(z+1) = z \Gamma(z)$.

\smallskip

\noindent
4) The digamma function $\psi(z)$ is defined by 
$$ \begin{displaystyle} \psi(z) = \frac{d}{dz} \log \Gamma(z) \end{displaystyle}.$$
\noindent
It satisfies $$\psi(z+1) = \psi(z) + \frac{1}{z}.$$

\smallskip

\noindent
5) The Chebyshev polynomials of the first and second kind are defined, respectively, by their Binet representations
\cite{spanier-1987a}
\begin{equation}
 T_{n}(x) = \frac{1}{2} \left[ ( x + \sqrt{x^{2}-1} )^{n} + (x - \sqrt{x^{2}-1})^{n} \right] 
 \label{cheby-t-binet}
 \end{equation}
\noindent
and 
\begin{equation}
U_{n}(x) = \frac{1}{2 \sqrt{x^{2}-1}} \left[ ( x + \sqrt{x^{2}-1} )^{n} - (x - \sqrt{x^{2}-1})^{n} \right]. 
\label{cheby-u-binet}
\end{equation}

\noindent

%

\bigskip

The  work presented here is based on the symbolic notation
\begin{equation}
\label{formal-B}
g(x  + B) = \frac{\pi}{2} \int_{-\infty}^{\infty} g \left( x  - \tfrac{1}{2} + iv \right) \text{sech}^{2}(\pi v) \, dv.
\end{equation}
\noindent

The formula \eqref{formal-B} is based on the fact that, if $L_{B}$ is a random variable with the square secant hyperbolic 
distribution 
\begin{equation}
\rho(x) = \frac{\pi}{2} \text{sech}^{2}( \pi x),
\label{den-sec1}
\end{equation}
\noindent
then 
\begin{equation}
B_{n} = \mathbb{E} \left( \imath L_{B} - \tfrac{1}{2} \right)^{n} = 
\int_{-\infty}^{\infty} \rho(u) \left( \imath u - \tfrac{1}{2} \right)^{n} \, du
\end{equation}
\noindent
so that, symbolically, with $g(x) = x^{n}$, 
\begin{equation}
B_{n} = g(B).
\end{equation}
\noindent
This extends to Bernoulli polynomials as 
\begin{equation}
B_{n}(x) = (B + x)^{n} = \mathbb{E} \left( x + \imath L_{B} - \tfrac{1}{2} \right)^{n}
\end{equation}
\noindent
and to any analytic function $g$ as 
\begin{equation}
  \mathbb{E} \left[ g( x - \tfrac{1}{2} + i L_{B} ) \right]  =  
   \frac{\pi}{2} \int_{-\infty}^{\infty} g\left( x - \tfrac{1}{2} + i v \right) \text{sech}^{2}(\pi v) \, dv.\label{expect-op}
\end{equation}
\noindent
This is complemented with the notation
\begin{equation}
\label{formal-U}
f(x+U) = \int_{0}^{1} f(x+u) \, du,
\end{equation}
that corresponds to  the average over a uniform distribution $U$ on $[0, \, 1]$.

\smallskip

The symbolic form \eqref{formal-B} is a restatement of the umbral approach described in \cite{dixit-2014a}.  The classical
umbral calculus begins with a sequence $\{ a_{n} \}$ and formally transforms it into powers $\mathfrak{a}^{n}$ of a 
new variable $\mathfrak{a}$, named the umbra of $\{ a_{n} \}$. The original sequence is then recovered by the 
evaluation map $\text{eval} \{ \mathfrak{a}^{n} \} = a_{n}$.  The \textit{Bernoulli umbra}, studied in 
\cite{dixit-2014a}, is defined by the generating function 
\begin{equation}
\text{eval} \{ \exp \left( t \mathfrak{B}(x) \right) \} = \frac{te^{xt}}{e^{t}-1}
\end{equation}
\noindent
and it  satisfies, with $\mathfrak{B} = \mathfrak{B}(0)$, 
\begin{equation}
- \mathfrak{B} = \mathfrak{B} + 1, \hspace{1mm}\text{and}\hspace{1mm}(-\mathfrak{B})^n=\mathfrak{B}^n\hspace{1mm}\text{for}\hspace{1mm}n\neq 1,
\end{equation}
\noindent
and 
\begin{equation}
\text{eval} \{ \mathfrak{B}(x) \} = \text{eval} \{ x + \mathfrak{B} \}.
\end{equation}
For more properties of Bernoulli umbrae, the reader is referred to Gessel \cite{gessel-2003a}.
\noindent
Theorem 2.3 in \cite{dixit-2014a} states that the Bernoulli umbra coincides with a random variable 
$i L_{B} - \tfrac{1}{2}$, in the sense that,
\begin{equation}
\text{eval} \{ g( \mathfrak{B}+ x ) \} = \mathbb{E} \left[ g( x - \tfrac{1}{2} + iL_{B} ) \right],    \label{umbral}
\end{equation}
\noindent 
for all admissible functions $g$.

Thus from \eqref{formal-B}, \eqref{expect-op} and \eqref{umbral}, one obtains the three equivalent notations
\begin{equation}
g(x + B) =  \mathbb{E} \left[ g( x - \tfrac{1}{2} + iL_{B} ) \right]=\text{eval} \{ g( \mathfrak{B}(x) ) \},
\end{equation}
and for brevity, we will mostly use the symbolic form $g(x+B)$.

Now take $\ell$ independent 
copies $\{ L_{B_{1}}, \cdots, L_{B_{\ell}} \}$ of the random variable $L_{B}$.  The  density $\rho_{\ell}(x)$ associated to 
$\begin{displaystyle} L = L_{B_{1}} + \cdots L_{B_{\ell}} \end{displaystyle}$
is then the $\ell$-fold  convolution of the density $\rho(x)$ of each summand. This is computed by the  recurrence 
\begin{equation}
\rho_{\ell}(x) = \int_{-\infty}^{\infty} \rho_{\ell-1}(u) \rho_{1}(x-u) \, du,
\label{convo-1}
\end{equation}
\noindent
starting with $\rho_{1}(x) = \rho(x)$.  A direct computation of the densities $\{ \rho_{\ell} \}$ is remarkably difficult.  The 
case $\ell = 2$ is presented in Section \ref{sec-family}.  The case $\ell=1$ and $g(x)=\log x$ of the  formula 
\begin{equation}
\label{sum-umbra}
\mathbb{E}[g(x - \tfrac{\ell}{2} + iL_{B_{1}} + i L_{B_{2}} + \cdots + i L_{B_{\ell}}  )] =  \int_{-\infty}^{\infty} 
\rho_{\ell}(u) g ( x - \tfrac{\ell}{2} + i u)\, du
\end{equation}
\noindent
was used in \cite{dixit-2014a} to evaluate the generating functions of the modified Bernoulli numbers and of 
Zagier polynomials. In the umbral notation, this quantity can be written as 
\begin{equation}
\text{eval} \left[ g( x + \mathfrak{B}_{1} + \cdots  + \mathfrak{B}_{\ell} ) \right] = 
\text{eval} \left[ g \left( \mathfrak{B}^{(\ell)}(x) \right) \right],
\end{equation}
so that the umbra associated with the modified N\"{o}rlund polynomials is 
\begin{equation}
\mathfrak{B}^{(\ell)} = \mathfrak{B}_{1} + \mathfrak{B}_{2} + \cdots + \mathfrak{B}_{\ell}.
\end{equation}

This is extended here to compute the corresponding generating function of the 
modified N\"{o}rlund polynomials.   A crucial step in the argument uses the following result which evaluates 
the logarithm of the umbra $\mathfrak{B}^{(\ell)}$. 

\begin{theorem}
\label{eval-mess00}
Let $\ell \in \mathbb{N}$ be fixed. For $x \in \mathbb{R}$,
\begin{equation*}
\text{eval} \{ \log \mathfrak{B}^{(\ell)}(x) \} 
=  - H_{\ell-1} + 
\frac{d^{\ell-1}}{dx^{\ell-1}} \left\{ \binom{x-1}{\ell -1} \psi \left( x - \left\lfloor \frac{\ell}{2} \right\rfloor \right) \right\}.
\end{equation*}
\noindent
Here $H_{r}$ is the harmonic number and $\psi(x)$ is the digamma function and $ \left\lfloor \frac{\ell}{2} \right\rfloor$ denotes 
the floor function. 
\end{theorem}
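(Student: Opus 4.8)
The plan is to reduce $\text{eval}\{\log\mathfrak{B}^{(\ell)}(x)\}$ to a statement about zeta functions via the identity $\log a = -\tfrac{d}{ds}\big|_{s=0}a^{-s}$. Since $\mathfrak{B}^{(\ell)}(x)=x+\mathfrak{B}_{1}+\cdots+\mathfrak{B}_{\ell}$ with independent summands, $\text{eval}\{e^{t\mathfrak{B}^{(\ell)}(x)}\}=e^{xt}\bigl(\tfrac{t}{e^{t}-1}\bigr)^{\ell}$ by \eqref{genfun-nor1}. Inserting this into $a^{-s}=\tfrac{1}{\Gamma(s)}\int_{0}^{\infty}t^{s-1}e^{-at}\,dt$ and interchanging (legitimate for $\realpart s$ and $x$ large), one finds
\[
\text{eval}\{(\mathfrak{B}^{(\ell)}(x))^{-s}\}=\frac{1}{\Gamma(s)}\int_{0}^{\infty}\frac{t^{s+\ell-1}e^{-xt}}{(1-e^{-t})^{\ell}}\,dt=(s)_{\ell}\,\zeta_{\ell}(s+\ell,x),
\]
where $(s)_{\ell}:=s(s+1)\cdots(s+\ell-1)=\Gamma(s+\ell)/\Gamma(s)$ and $\zeta_{\ell}(s,x)=\sum_{m\ge0}\binom{m+\ell-1}{\ell-1}(x+m)^{-s}$ is the Barnes zeta function. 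Hence $\text{eval}\{\log\mathfrak{B}^{(\ell)}(x)\}=-\tfrac{d}{ds}\big|_{s=0}\bigl[(s)_{\ell}\,\zeta_{\ell}(s+\ell,x)\bigr]$, and the bracket is analytic near $s=0$ because the simple zero of $(s)_{\ell}$ cancels the simple pole of $\zeta_{\ell}(\,\cdot\,,x)$ at $s=\ell$.

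Next I would introduce the shift $\mu=\lfloor\ell/2\rfloor$. Since $\binom{m+\ell-1-\mu}{\ell-1}$ vanishes for $0\le m<\mu$ (because $0\le\ell-1-\mu\le\ell-2$), the substitution $m\mapsto m-\mu$ gives $\zeta_{\ell}(s,x)=\sum_{m\ge0}Q_{x}(x-\mu+m)(x-\mu+m)^{-s}$, where $Q_{x}(u)=\binom{u-x+\ell-1}{\ell-1}=\sum_{k=1}^{\ell}c_{k}(x)u^{\ell-k}$ is a polynomial of degree $\ell-1$ in $u$ with $c_{1}(x)=1/(\ell-1)!$. Consequently $\zeta_{\ell}(s+\ell,x)=\sum_{k=1}^{\ell}c_{k}(x)\,\zeta(s+k,x-\mu)$ in terms of the Hurwitz zeta function. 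Feeding this into $-\tfrac{d}{ds}\big|_{s=0}[(s)_{\ell}\,\cdot\,]$ and using $(s)_{\ell}=(\ell-1)!\,s\bigl(1+H_{\ell-1}s+O(s^{2})\bigr)$ together with $\zeta(s+1,a)=\tfrac1s-\psi(a)+O(s)$: the $k=1$ term contributes $-H_{\ell-1}+\psi(x-\mu)$, while each $k\ge2$ term (where $\zeta(s+k,\cdot)$ is regular at $s=0$) contributes $-(\ell-1)!\,c_{k}(x)\zeta(k,x-\mu)$. This yields
\[
\text{eval}\{\log\mathfrak{B}^{(\ell)}(x)\}=-H_{\ell-1}+\psi(x-\mu)-(\ell-1)!\sum_{k=2}^{\ell}c_{k}(x)\,\zeta(k,x-\mu).
\]

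It remains to recognize the right-hand side as the asserted derivative. Because $Q_{x}(u)$ depends on $u$ and $x$ only through $u-x$, one has $\partial_{x}Q_{x}=-\partial_{u}Q_{x}$, hence $c_{k}(x)=\tfrac{1}{(\ell-k)!}\partial_{u}^{\ell-k}Q_{x}(0)=\tfrac{(-1)^{\ell-k}}{(\ell-k)!}\tfrac{d^{\ell-k}}{dx^{\ell-k}}\binom{\ell-1-x}{\ell-1}$, and since $\binom{\ell-1-x}{\ell-1}=(-1)^{\ell-1}\binom{x-1}{\ell-1}$ this becomes $c_{k}(x)=\tfrac{(-1)^{k+1}}{(\ell-k)!}\tfrac{d^{\ell-k}}{dx^{\ell-k}}\binom{x-1}{\ell-1}$. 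Combining this with $\zeta(k,a)=\tfrac{(-1)^{k}}{(k-1)!}\psi^{(k-1)}(a)$ for $k\ge2$ turns the $k$-th summand above into $\binom{\ell-1}{k-1}\bigl(\tfrac{d^{\ell-k}}{dx^{\ell-k}}\binom{x-1}{\ell-1}\bigr)\psi^{(k-1)}(x-\mu)$, and adjoining the $\psi(x-\mu)$ term as the $k=1$ summand (here $\tfrac{d^{\ell-1}}{dx^{\ell-1}}\binom{x-1}{\ell-1}=1$), the sum over $k=1,\dots,\ell$ is exactly the Leibniz expansion of $\tfrac{d^{\ell-1}}{dx^{\ell-1}}\bigl\{\binom{x-1}{\ell-1}\psi(x-\lfloor\ell/2\rfloor)\bigr\}$. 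The identity, established for $x$ large, then extends to all admissible real $x$ by analytic continuation.

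The main obstacle I anticipate is the bookkeeping in the second step: the simple zero of $(s)_{\ell}$ at $s=0$ must be expanded against the pole of $\zeta_{\ell}$ to first order in $s$, and it is precisely this first-order cancellation that produces both the constant $-H_{\ell-1}$ and the shift $\psi(x-\lfloor\ell/2\rfloor)$; the one genuinely clever point is choosing the reindexing shift $\mu=\lfloor\ell/2\rfloor$ (any $\mu\in\{0,\dots,\ell-1\}$ works, the polynomial $Q_{x}$ being independent of it, and the floor is the natural integer centering inherited from the representation $\mathfrak{B}^{(\ell)}(x)\sim x-\tfrac{\ell}{2}+i(L_{B_{1}}+\cdots+L_{B_{\ell}})$). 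Everything else is the Mellin–umbral machinery already used for $\ell=1$ in \cite{dixit-2014a} together with routine binomial manipulation.
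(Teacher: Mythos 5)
Your argument is correct, and it reaches the theorem by a genuinely different route than the paper. The paper works entirely inside the symbolic calculus of Section \ref{sec-gf1}: it reduces the claim, via the inversion \eqref{inversion-11} between the symbols $U$ and $B$ and Lemma \ref{del-anti}, to showing $f(x+U^{(\ell-1)})=\psi(x)$, and then the whole content is the finite-difference identity of Lemma \ref{delta-1}, $\Delta^{\ell}\bigl[\binom{x+p}{\ell}\psi(x)\bigr]=H_{\ell}+\psi(x+p+1)$, applied with $p=\lfloor \ell/2\rfloor-1$. You instead use $\log a=-\tfrac{d}{ds}\big|_{s=0}a^{-s}$ together with the Mellin representation to identify $\text{eval}\{(\mathfrak{B}^{(\ell)}(x))^{-s}\}$ with $(s)_{\ell}\,\zeta_{\ell}(s+\ell,x)$, reduce the Barnes zeta function to Hurwitz zeta functions (a Choi-type expansion, which the paper itself only invokes later, in Section \ref{sec-hurwitz}), and recover the stated derivative via the Leibniz rule; I checked the sign and coefficient bookkeeping ($c_{k}(x)$, $(s)_{\ell}=(\ell-1)!\,s(1+H_{\ell-1}s+O(s^{2}))$, $\zeta(s+1,a)=\tfrac1s-\psi(a)+O(s)$, $\zeta(k,a)=\tfrac{(-1)^{k}}{(k-1)!}\psi^{(k-1)}(a)$) and it is consistent; your observation that any shift $\mu\in\{0,\dots,\ell-1\}$ works mirrors exactly the freedom $-1\le p\le \ell-1$ in the paper's Lemma \ref{delta-1}. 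What the paper's route buys is elementarity: only finite differences and the $U$/$B$ inversion, no analytic continuation in $s$ or $x$. What your route buys is an explicit link to the Barnes zeta function (making Theorem \ref{coro-nice1} and Section \ref{sec-hurwitz} almost immediate, since the logarithmic moment is read off from $\partial_{s}\zeta_{\ell}$ at $s=\ell$) and a derivation in which the constant $-H_{\ell-1}$ and the digamma shift emerge automatically from the Laurent expansion rather than being verified after the fact. Two small points you should tighten: the identity $\text{eval}\{e^{-t\mathfrak{B}^{(\ell)}(x)}\}=e^{-xt}t^{\ell}(1-e^{-t})^{-\ell}$ should be justified for real $t>0$ analytically (via the Fourier transform $\widehat{\rho_{\ell}}(\xi)=(\pi\xi/\sinh\pi\xi)^{\ell}$ of Section \ref{sec-family}), not merely quoted from the formal series \eqref{genfun-nor1}; and the final continuation "to all admissible real $x$" should acknowledge the same domain restrictions ($x$ away from the poles of $\psi(x-\lfloor\ell/2\rfloor)$ and of the left-hand side) that are implicit in the paper's statement. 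Neither point is a gap in substance.
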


The following generating function for the modified N\"{o}rlund polynomials is now obtained from the 
previous theorem.

\begin{theorem}
\label{gen-fun-modber-0}
Let $\ell \in \mathbb{N}$ be fixed. The generating function 
\begin{equation*}
F_{B^{*}}(z;\ell)  =   \sum_{n=1}^{\infty} B_{n}^{(\ell)*}z^{n}
\end{equation*}
\noindent
for the modified N\"{o}rlund polynomials $B_{n}^{(\ell)*}$ is given by 
\begin{eqnarray*}
F_{B^{*}}(z;\ell)  =  - \frac{1}{2} \log z  
 + \frac{1}{2} \left[ H_{\ell-1} - \frac{d^{\ell-1}}{dx^{\ell-1}} \left\{ 
\binom{x-1}{\ell - 1} \psi\left( x - \left\lfloor \frac{\ell}{2} \right\rfloor \right) \right\}  \right]
\end{eqnarray*}
\noindent 
evaluated at $x = z + 1/z +\ell-2$.
\end{theorem}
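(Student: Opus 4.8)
The plan is to express $F_{B^{*}}(z;\ell)$ in terms of the umbral logarithm $\text{eval}\{\log\mathfrak{B}^{(\ell)}(x)\}$ and then insert the closed form of Theorem~\ref{eval-mess00}. First I would interchange the order of summation in
\begin{equation*}
F_{B^{*}}(z;\ell)=\sum_{n=1}^{\infty}\sum_{r=0}^{n}\binom{n+r}{2r}\frac{B_{r}^{(\ell)}}{n+r}z^{n}=\sum_{r=0}^{\infty}B_{r}^{(\ell)}\,S_{r}(z),\qquad S_{r}(z):=\sum_{n=1}^{\infty}\binom{n+r}{2r}\frac{z^{n}}{n+r},
\end{equation*}
which is legitimate at the level of formal power series because $\binom{n+r}{2r}=0$ for $1\le n<r$. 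The sums $S_{r}$ are elementary: one has $S_{0}(z)=\sum_{n\ge1}z^{n}/n=-\log(1-z)$, and for $r\ge1$ the $n=0$ term vanishes, so starting from $\sum_{n\ge 0}\binom{n+r}{2r}z^{n}=z^{r}/(1-z)^{2r+1}$ and integrating the factor $1/(n+r)$ gives $z^{r}S_{r}(z)=\int_{0}^{z}t^{2r-1}(1-t)^{-2r-1}\,dt$, which the substitution $u=t/(1-t)$ collapses to $\frac{1}{2r}(z/(1-z))^{2r}$, i.e. $S_{r}(z)=\frac{1}{2r}z^{r}/(1-z)^{2r}$. Since $B_{0}^{(\ell)}=1$, this yields, with the abbreviation $w^{2}:=z/(1-z)^{2}$,
\begin{equation*}
F_{B^{*}}(z;\ell)=-\log(1-z)+\frac12\sum_{r=1}^{\infty}\frac{(w^{2})^{r}}{r}\,B_{r}^{(\ell)}.
\end{equation*}

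Next I would convert the surviving series into an umbral logarithm. Since the $\mathfrak{B}_{i}$ are independent umbrae, $\text{eval}\{\exp(t\mathfrak{B}^{(\ell)})\}=\prod_{i=1}^{\ell}\text{eval}\{\exp(t\mathfrak{B}_{i})\}=(t/(e^{t}-1))^{\ell}$, hence $B_{r}^{(\ell)}=\text{eval}\{(\mathfrak{B}^{(\ell)})^{r}\}$, and therefore
\begin{equation*}
\sum_{r=1}^{\infty}\frac{(w^{2})^{r}}{r}B_{r}^{(\ell)}=\text{eval}\bigl\{-\log(1-w^{2}\mathfrak{B}^{(\ell)})\bigr\},
\end{equation*}
the right side being read as a formal power series in $z$ with umbral coefficients to which $\text{eval}$ is applied coefficientwise. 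The key simplification is the reflection identity for the N\"{o}rlund umbra: since $(t/(e^{t}-1))^{\ell}e^{\ell t}=((-t)/(e^{-t}-1))^{\ell}$ generates $(-1)^{n}B_{n}^{(\ell)}$, one has symbolically $-\mathfrak{B}^{(\ell)}\equiv \ell+\mathfrak{B}^{(\ell)}$ (equivalently, the density $\rho_{\ell}$ of $L_{B_{1}}+\cdots+L_{B_{\ell}}$ is even). Applying this inside the logarithm, factoring out the scalar $w^{2}$, and using $(1-z)^{2}/z=z+1/z-2$ together with $\mathfrak{B}^{(\ell)}(x)\equiv x+\mathfrak{B}^{(\ell)}$,
\begin{equation*}
1-w^{2}\mathfrak{B}^{(\ell)}\;\equiv\;\frac{z}{(1-z)^{2}}\Bigl(\frac{(1-z)^{2}}{z}+\ell+\mathfrak{B}^{(\ell)}\Bigr)=\frac{z}{(1-z)^{2}}\,\mathfrak{B}^{(\ell)}\bigl(z+\tfrac1z+\ell-2\bigr).
\end{equation*}
Taking $-\log$ and then $\text{eval}$ of both sides — the splitting $\log(ab)=\log a+\log b$ and the appearance of $\log z$, $\log(1-z)$ understood as in \cite{dixit-2014a}, where the non-holomorphic parts cancel — gives
\begin{equation*}
\sum_{r=1}^{\infty}\frac{(w^{2})^{r}}{r}B_{r}^{(\ell)}=-\log z+2\log(1-z)-\text{eval}\bigl\{\log\mathfrak{B}^{(\ell)}(z+1/z+\ell-2)\bigr\}.
\end{equation*}

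Substituting back, the $\pm\log(1-z)$ terms cancel and
\begin{equation*}
F_{B^{*}}(z;\ell)=-\tfrac12\log z-\tfrac12\,\text{eval}\bigl\{\log\mathfrak{B}^{(\ell)}(z+1/z+\ell-2)\bigr\};
\end{equation*}
inserting the closed form for $\text{eval}\{\log\mathfrak{B}^{(\ell)}(x)\}$ from Theorem~\ref{eval-mess00} and evaluating at $x=z+1/z+\ell-2$ produces exactly the stated formula (and, for $\ell=1$, recovers \eqref{alphais2} since $H_{0}=0$). I expect the main obstacle to be the bookkeeping in the second paragraph: making rigorous, as an identity of formal power series in $z$, both the reflection substitution inside the infinite umbral series and the splitting of $\log(1-w^{2}\mathfrak{B}^{(\ell)})$ into the scalar piece $\log\bigl(z/(1-z)^{2}\bigr)$ and the umbral piece $\log\mathfrak{B}^{(\ell)}(\cdot)$ — in particular verifying that the spurious $-\log z$ from the scalar piece is precisely cancelled by the $\psi$-asymptotics hidden inside $\text{eval}\{\log\mathfrak{B}^{(\ell)}(z+1/z+\ell-2)\}$, so that the final answer is a genuine power series with Taylor coefficients $B_{n}^{(\ell)*}$. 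Everything else is routine.
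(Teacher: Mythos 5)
Your proposal is correct and follows essentially the paper's route: after unwinding the double sum you arrive at the identity $F_{B^{*}}(z;\ell)=-\tfrac{1}{2}\,\mathrm{eval}\{\log((1-z)^{2}-z\mathfrak{B}^{(\ell)})\}$ (which the paper simply imports by adapting Equation (3.4) of \cite{dixit-2014a}), and then you use the same reflection $-\mathfrak{B}^{(\ell)}=\mathfrak{B}^{(\ell)}+\ell$, the shift $x+\mathfrak{B}^{(\ell)}=\mathfrak{B}^{(\ell)}(x)$ at $x=z+1/z+\ell-2$, and Theorem \ref{eval-mess00}, exactly as in the paper. The only difference is that you spell out the derivation of the intermediate umbral identity (via $S_{r}(z)=\tfrac{1}{2r}z^{r}(1-z)^{-2r}$) rather than citing it, which is a welcome but not essentially different addition.
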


An alternate representation for $\text{eval} \{  \log \mathfrak{B}^{(\ell)}(x) \}$  gives the following 
remarkable integral evaluation involving the density $\rho_{\ell}(x)$.
\begin{theorem}
\label{coro-nice1}
Let $\ell \in \mathbb{N}$ be fixed. Then 
\begin{multline*}
 \int_{0}^{\infty}  \log(1+ bu^{2}) \, \rho_{\ell}(u) \, du =  \\
 -    \log \left| x - \frac{\ell}{2} \right| -  H_{\ell-1} + \frac{d^{\ell-1}}{dx^{\ell-1}} \left\{ 
\binom{x-1}{\ell-1} \psi\left( x - \left\lfloor \frac{\ell}{2} \right\rfloor \right) \right\}  
\end{multline*}
\noindent
with $b = (x - \ell/2)^{-2}$.
\end{theorem}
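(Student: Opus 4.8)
The plan is to feed the symbolic identity \eqref{sum-umbra} (with $\ell$ summands) the function $g(x)=\log x$ and then convert the resulting complex integral into the real one in the statement. Explicitly, \eqref{sum-umbra} and the relation $\text{eval}\{g(\mathfrak{B}^{(\ell)}(x))\}=\int_{-\infty}^{\infty}\rho_{\ell}(u)\,g(x-\tfrac{\ell}{2}+iu)\,du$ give
\[
\text{eval}\{\log\mathfrak{B}^{(\ell)}(x)\}=\int_{-\infty}^{\infty}\rho_{\ell}(u)\,\log\!\left(x-\tfrac{\ell}{2}+iu\right)du.
\]
Since $\rho_{1}$ is even and $\rho_{\ell}$ is an $\ell$-fold convolution of $\rho_{1}$, the density $\rho_{\ell}$ is even; pairing $u$ with $-u$ and using $\log(a+iu)+\log(a-iu)=\log(a^{2}+u^{2})$ with $a=x-\tfrac{\ell}{2}>0$ (principal branch, the two arguments being opposite and in $(-\pi/2,\pi/2)$) yields
\[
\text{eval}\{\log\mathfrak{B}^{(\ell)}(x)\}=\int_{0}^{\infty}\rho_{\ell}(u)\,\log\!\left(\bigl(x-\tfrac{\ell}{2}\bigr)^{2}+u^{2}\right)du.
\]

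Next I would factor $(x-\tfrac{\ell}{2})^{2}$ out of the logarithm, put $b=(x-\tfrac{\ell}{2})^{-2}$, and split the integrand as $\log\bigl((x-\tfrac{\ell}{2})^{2}\bigr)+\log(1+bu^{2})$. Because $\rho_{\ell}$ is an even probability density, $\int_{0}^{\infty}\rho_{\ell}(u)\,du=\tfrac12$, so
\[
\text{eval}\{\log\mathfrak{B}^{(\ell)}(x)\}=\log\!\left|x-\tfrac{\ell}{2}\right|+\int_{0}^{\infty}\log(1+bu^{2})\,\rho_{\ell}(u)\,du.
\]
Solving for the integral and substituting the closed form of $\text{eval}\{\log\mathfrak{B}^{(\ell)}(x)\}$ from Theorem \ref{eval-mess00} reproduces exactly the asserted formula for $x>\ell/2$; the absolute value $|x-\ell/2|$ in the statement then records the extension by analytic continuation to other real $x$ (away from the poles of $\psi$), where the same computation runs with $(x-\ell/2)^{2}$ in place of $|x-\ell/2|^{2}$.

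The routine points are convergence and the branch of the logarithm: the exponential decay of $\text{sech}^{2}$, hence of every $\rho_{\ell}$, makes all the integrals absolutely convergent and legitimizes using \eqref{sum-umbra} with $g=\log$, while the principal-branch identity $\log(a+iu)+\log(a-iu)=\log(a^{2}+u^{2})$ for $a>0$ is immediate. The part I expect to need the most care is the bookkeeping that links the symbolic/umbral evaluation to the honest real integral over the range of $x$ in which Theorem \ref{eval-mess00} is valid, and checking that the branch conventions used there are consistent with the $\log|x-\ell/2|$ normalization appearing here; once that is settled, the result is a direct corollary of Theorem \ref{eval-mess00}.
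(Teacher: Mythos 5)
Your proposal is correct and follows essentially the same route as the paper: both convert $\mathrm{eval}\{\log \mathfrak{B}^{(\ell)}(x)\}$ into a real integral using the symmetry of $\rho_{\ell}$ (the paper phrases this as $L$ and $-L$ being identically distributed, giving $\tfrac{1}{2}\mathbb{E}\bigl[\log\bigl((x-\tfrac{\ell}{2})^{2}+L^{2}\bigr)\bigr]$), then peel off $\log\left|x-\tfrac{\ell}{2}\right|$ and invoke Theorem \ref{eval-mess00}. The minor differences (pairing $u$ with $-u$ inside the integral versus the probabilistic statement, and your explicit branch and normalization checks) are cosmetic.
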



Theorems \ref{gen-fun-modber-0} and \ref{coro-nice1} readily give the following result. We record it as 
a theorem only to emphasize the link between the generating function of the modified N\"{o}rlund polynomials
and the definite integral containing the density $\rho_{\ell}(x)$.
\begin{theorem}
\label{gen-fun-density}
Let $\ell \in \mathbb{N}$ be fixed. The generating function 
\begin{equation*}
F_{B^{*}}(z;\ell)  =   \sum_{n=1}^{\infty} B_{n}^{(\ell)*}z^{n}
\end{equation*}
\noindent
for the modified N\"{o}rlund polynomials $B_{n}^{(\ell)*}$ is given by 
\begin{eqnarray*}
F_{B^{*}}(z;\ell)  =  - \frac{1}{2} \log z  - \frac{1}{2} \log \left| x - \frac{\ell}{2} \right| - 
\frac{1}{2} \int_{0}^{\infty} \log(1 + bu^{2}) \rho_{\ell}(u) \, du 
\end{eqnarray*}
\noindent 
with $b = (x - \ell/2)^{-2}$ and  $x = z + 1/z +\ell-2$.
\end{theorem}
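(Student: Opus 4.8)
The plan is to obtain Theorem \ref{gen-fun-density} directly from the two preceding results, Theorem \ref{gen-fun-modber-0} and Theorem \ref{coro-nice1}, with no fresh computation. Both are stated for a fixed $\ell \in \mathbb{N}$, and in their final form the free variable $x$ is specialized to $x = z + 1/z + \ell - 2$. The key observation is that the expression
\[
H_{\ell-1} - \frac{d^{\ell-1}}{dx^{\ell-1}}\left\{ \binom{x-1}{\ell-1}\,\psi\left( x - \left\lfloor \frac{\ell}{2} \right\rfloor \right) \right\}
\]
occurs, up to an overall sign, on the right-hand side of both theorems, so one of them can be substituted into the other to eliminate it.

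First I would transpose terms in Theorem \ref{coro-nice1} to write
\[
H_{\ell-1} - \frac{d^{\ell-1}}{dx^{\ell-1}}\left\{ \binom{x-1}{\ell-1}\,\psi\left( x - \left\lfloor \frac{\ell}{2} \right\rfloor \right) \right\} = -\log\left| x - \frac{\ell}{2} \right| - \int_{0}^{\infty} \log(1+bu^{2})\,\rho_{\ell}(u)\,du,
\]
valid with $b = (x-\ell/2)^{-2}$. Then I would insert this identity into the statement of Theorem \ref{gen-fun-modber-0},
\[
F_{B^{*}}(z;\ell) = -\frac{1}{2}\log z + \frac{1}{2}\left[ H_{\ell-1} - \frac{d^{\ell-1}}{dx^{\ell-1}}\left\{ \binom{x-1}{\ell-1}\,\psi\left( x - \left\lfloor \frac{\ell}{2} \right\rfloor \right) \right\} \right],
\]
evaluated at $x = z + 1/z + \ell - 2$, which at once yields
\[
F_{B^{*}}(z;\ell) = -\frac{1}{2}\log z - \frac{1}{2}\log\left| x - \frac{\ell}{2} \right| - \frac{1}{2}\int_{0}^{\infty}\log(1+bu^{2})\,\rho_{\ell}(u)\,du
\]
with $b = (x-\ell/2)^{-2}$ and $x = z + 1/z + \ell - 2$, which is precisely the claim.

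Because the argument is a single substitution, there is no genuine analytic difficulty; the only thing to verify is the consistency of the bookkeeping, namely that the specialization $x = z + 1/z + \ell - 2$ is the one appearing in Theorem \ref{gen-fun-modber-0} and in the target statement, and that $b = (x - \ell/2)^{-2}$ is defined identically in Theorem \ref{coro-nice1} and in Theorem \ref{gen-fun-density}. If one prefers to treat $F_{B^{*}}(z;\ell)$ as an analytic function rather than a formal power series, one should also note that $\rho_{\ell}$ decays exponentially while $\log(1+bu^{2})$ grows only logarithmically in $u$, so the integral converges for every admissible $b$; hence all three terms on the right are well defined on a punctured neighbourhood of $z = 0$ and the identity extends there by analytic continuation from the formal-power-series identity. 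This last, largely cosmetic, point is the closest thing to an obstacle in the proof.
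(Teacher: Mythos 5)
Your proof is correct and takes exactly the paper's route: the paper derives Theorem \ref{gen-fun-density} by simply combining Theorems \ref{gen-fun-modber-0} and \ref{coro-nice1}, which is precisely your substitution. The sign bookkeeping and the specializations $x = z + 1/z + \ell - 2$ and $b = (x - \ell/2)^{-2}$ all check out.
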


Section \ref{sec-gf1} describes a symbolic formalism based on two probability densities. This is used in Section \ref{sec-bf-nor}
to obtain an expression for the generating function of the N\"{o}rlund polynomials. Section \ref{sec-family} presents a 
family of densities that provide an alternative form of this generating function. These densities satisfy a differential-difference 
equation and the initial conditions are evaluated in Section \ref{sec-gf-nor00}. The last two sections uses the densities 
described above to evaluate
some definite integrals involving Chebyshev polynomials and the Hurwitz zeta function.  A direct evaluation of these 
examples seems out of the range of the current techniques of integration. 

\section{Some symbolic formalism}
\label{sec-gf1}

The definition of the digamma function as $\psi(z) = \frac{d}{dz} \log \Gamma(z)$  immediately gives the 
evaluation 
\begin{equation}
\int_{0}^{1} \psi(x+t) \, dt = \log x.
\label{rela-1}
\end{equation}
\noindent
The inversion formula
\begin{equation}
\psi(x) = \frac{\pi}{2} \int_{-\infty}^{\infty} \log(x - \tfrac{1}{2} + i u) \, \text{sech}^{2} \pi u \, du
\label{rela-2}
\end{equation}
\noindent
was established in Theorem $2.5$ of  \cite{dixit-2014a}.

In the  notation \eqref{formal-U} and \eqref{formal-B},  \eqref{rela-1} and \eqref{rela-2} 
expresses the equivalence of the relations
\begin{equation}
\psi(x+U) = \log x \text{ and }  \psi(x) = \log(x+B).
\label{inversion-1}
\end{equation}
\noindent
This is now shown to be a particular case  of a more general inversion formula.

\begin{definition}
For real-valued functions $f$ and $g$, define in recursive form
\begin{equation}
f(x+ U^{(\ell)}) = f(x+ U + U^{(\ell-1)}) \quad \text{ for } \ell \geq 2,
\end{equation}
\noindent
with $U^{(1)} = U$, and similarly 
\begin{equation}
g(x+ B^{(\ell)}) = g(x+ B + B^{(\ell-1)})  \quad \text{ for } \ell \geq 2,
\end{equation}
\noindent
with $B^{(1)} = B$.
\end{definition}

In the lemma given below, this  symbolic formalism is connected to anti-derivatives $F^{(\ell)}$ of 
the function $f$, defined as any function $F^{(\ell)}$ such that 
 $\begin{displaystyle} \frac{d^{\ell}}{dx^{\ell}} F^{(\ell)}(x) = f(x) \end{displaystyle}$,
via the classical forward difference operator $\Delta$ defined by 
\begin{equation}
\Delta f(x) = f(x+1)-f(x).
\end{equation}
It is clear that if $f$ is a polynomial of degree $\ell$, then $\Delta f(x)$ is also a polynomial and its 
degree is $\ell-1$.

\begin{lemma}
\label{del-anti}
Let $F^{(\ell)}$ be an antiderivative of $f$ of order $\ell$. Then 
\begin{equation}
f( x + U^{(\ell)}) =  \Delta^{\ell} F^{(\ell)}(x).
\end{equation}
\end{lemma}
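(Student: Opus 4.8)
The plan is to prove the identity $f(x + U^{(\ell)}) = \Delta^\ell F^{(\ell)}(x)$ by induction on $\ell$, leveraging the recursive definition $f(x + U^{(\ell)}) = f(x + U + U^{(\ell-1)})$ together with the base relation \eqref{rela-1}, namely $\int_0^1 \psi(x+t)\,dt = \log x$, in its generalized form $f(x+U) = \int_0^1 f(x+u)\,du = F^{(1)}(x+1) - F^{(1)}(x) = \Delta F^{(1)}(x)$, where $F^{(1)}$ is any first antiderivative of $f$. This handles $\ell = 1$ directly. The key point to isolate first is the interaction between the operator $\Delta$ and integration against the uniform density: applying the averaging operator $h \mapsto \int_0^1 h(\,\cdot\, + u)\,du$ to a function and then differentiating recovers $\Delta$ acting on an antiderivative, and crucially $\Delta$ commutes with $\tfrac{d}{dx}$ and with the uniform averaging operator, since all three are translation-invariant.

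For the inductive step, assume the statement holds for $\ell - 1$: for any function $h$ with antiderivative $H^{(\ell-1)}$ of order $\ell-1$, we have $h(x + U^{(\ell-1)}) = \Delta^{\ell-1} H^{(\ell-1)}(x)$. Now fix $f$ with antiderivative $F^{(\ell)}$ of order $\ell$, and set $g = F^{(1)}$, a first antiderivative of $f$; then $F^{(\ell)}$ is an antiderivative of $g$ of order $\ell - 1$. Unwinding the recursion, $f(x + U^{(\ell)}) = f(x + U + U^{(\ell-1)})$, which should be read as first averaging $f$ over the innermost uniform variable and then applying the $U^{(\ell-1)}$-operator to the result; equivalently, by the translation-invariance noted above, one may apply $U^{(\ell-1)}$ first to $f$ in the variable $x$ and then average over $U$. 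I would compute $f(x + U + U^{(\ell-1)})$ by applying the $\ell-1$ averaging operators to $f(x+U+\cdot)$, invoking the induction hypothesis with $h(y) = f(y+U) = \Delta g(y)$ (whose antiderivative of order $\ell-1$ is $\Delta F^{(\ell)}$), to get $\Delta^{\ell-1}\big(\Delta F^{(\ell)}\big)(x) = \Delta^\ell F^{(\ell)}(x)$, using that $\Delta$ commutes with the averaging operators. Equivalently, one can apply the induction hypothesis to $g$ directly: $g(x + U^{(\ell-1)}) = \Delta^{\ell-1} F^{(\ell)}(x)$, then note $f(x+U^{(\ell)}) = f(x + U + U^{(\ell-1)}) = \Delta(\,g(x + U^{(\ell-1)})\,) = \Delta \Delta^{\ell-1} F^{(\ell)}(x)$, where the middle equality is exactly the $\ell=1$ case applied in the shifted variable $x + U^{(\ell-1)}$.

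The main obstacle, and the step deserving the most care, is justifying that the symbolic operator $U^{(\ell-1)}$ genuinely commutes with the forward difference $\Delta$ and with the outer $U$-average — in other words, that the recursive symbolic definition is associative and that each constituent operator is translation-invariant, so that "apply $U$ then $U^{(\ell-1)}$" equals "apply $U^{(\ell-1)}$ then $U$" and both equal "apply $U^{(\ell)}$". Once this is established (it follows from writing each $U$-average as convolution with the indicator of $[0,1]$ and using Fubini, with integrability guaranteed by the standing assumption that $f$ is sufficiently regular, e.g. polynomial or analytic, on the relevant domain), the rest is a clean two-line induction. A secondary subtlety is that $F^{(\ell)}$ is only defined up to a polynomial of degree $\le \ell - 1$, but any such polynomial is annihilated by $\Delta^\ell$, so the right-hand side $\Delta^\ell F^{(\ell)}(x)$ is well defined independent of the choice of antiderivative; this should be remarked on explicitly for consistency with the statement.
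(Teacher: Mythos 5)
Your proposal is correct and follows essentially the same route as the paper: induction on $\ell$ with the Fundamental Theorem of Calculus giving the base case $f(x+U)=\Delta F^{(1)}(x)$, then peeling off one uniform average per step using translation-invariance of the operators (the paper peels the outer $U$, writing $f(x+U^{(\ell+1)})=\Delta^{\ell}F^{(\ell)}(x+U)=\Delta^{\ell+1}F^{(\ell+1)}(x)$, while you peel from the other end, which is the same argument). Your added remarks on the commutation of $\Delta$ with the averaging and on the independence of the choice of antiderivative are sound points of care that the paper leaves implicit.
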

\begin{proof}
The case $\ell=1$ is straightforward since 
\begin{equation}
f(x+U) = \int_{0}^{1} f(x+u) \, du = F(x+1) - F(x)
\end{equation}
\noindent
by the Fundamental Theorem of Calculus. The inductive step is 
\begin{eqnarray*}
f \left(  x + U^{(\ell+1)} \right) & = & f \left( x + U^{(\ell)} + U \right) \\
 & = & \Delta^{\ell} F^{(\ell)}(x+U) \\
 & = & \Delta^{\ell} \left[ F^{(\ell+1)}(x+1) - F^{(\ell+1)}(x) \right] \\
 & = & \Delta^{\ell+1} F^{(\ell+1)}(x).
 \end{eqnarray*}
\end{proof}

The next result  is a generalization of \eqref{inversion-1}: it shows that the symbols  $U$ and $B$ invert 
each other. The proof uses the evaluation of the definite integral 
\begin{equation}
\int_{0}^{\infty} \frac{\cos zv}{\cosh^{2} \pi v} \, dv = \frac{z}{2 \pi} \, \left(  \text{sinh} \frac{z}{2} \right)^{-1},
\label{39822a}
\end{equation}
which is obtained from entry $3.982.2$ in \cite{gradshteyn-2015a}:
\begin{equation}
\label{39822b}
\int_{0}^{\infty} \frac{\cos ax}{\cosh^{2} \beta x} \, dx = \frac{\pi a}{2 \beta^{2}} \, \left(  \text{sinh} \frac{\pi a}{2 \beta} \right)^{-1}, 
\quad \realpart{\beta}>0, \, a > 0.
\end{equation}
\noindent 
A proof of this entry will appear in \cite{dixit-2014f}.

\begin{theorem}
For any admissible formal power series,
\begin{equation}
g(x) = f(x + U)  \text{ is equivalent to } f(x) = g(x+B).
\end{equation}
\end{theorem}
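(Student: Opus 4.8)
The plan is to establish the equivalence of the two symbolic relations at the level of Fourier/integral transforms, exploiting the fact that the operators $f \mapsto f(\cdot + U)$ and $f \mapsto f(\cdot + B)$ are convolution operators against known densities. First I would interpret $g(x) = f(x+U) = \int_0^1 f(x+u)\,du$ as convolution of $f$ with the indicator of $[-1,0]$ (equivalently, the density of $-U$), and $f(x) = g(x+B) = \tfrac{\pi}{2}\int_{-\infty}^{\infty} g(x-\tfrac12+iv)\operatorname{sech}^2(\pi v)\,dv$ as the action of the operator associated to the Bernoulli umbra. The claim is then that these two operators are mutual inverses on the relevant space of admissible power series. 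Since everything is linear, it suffices to verify this on the exponential generating kernel, i.e. to check that applying the $U$-operator and then the $B$-operator (in either order) to $e^{xt}$ returns $e^{xt}$.

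The key computation is the following. Applying the $U$-operator to $e^{xt}$ gives $\int_0^1 e^{(x+u)t}\,du = e^{xt}\cdot\frac{e^t-1}{t}$. Applying the $B$-operator to $e^{xt}$ gives $\frac{\pi}{2}\int_{-\infty}^{\infty} e^{(x-1/2+iv)t}\operatorname{sech}^2(\pi v)\,dv = e^{xt}\cdot\frac{\pi}{2}e^{-t/2}\int_{-\infty}^{\infty} e^{ivt}\operatorname{sech}^2(\pi v)\,dv$. The remaining integral is exactly the Fourier transform evaluated via \eqref{39822a}: writing $z = t$ and using evenness of $\operatorname{sech}^2$, one gets $\int_{-\infty}^{\infty} e^{ivt}\operatorname{sech}^2(\pi v)\,dv = 2\int_0^{\infty}\cos(tv)\operatorname{sech}^2(\pi v)\,dv = \frac{t}{\pi}\left(\sinh\frac{t}{2}\right)^{-1}$. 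Hence the $B$-operator sends $e^{xt}$ to $e^{xt}\cdot\frac{e^{-t/2}}{2}\cdot\frac{t}{\sinh(t/2)} = e^{xt}\cdot\frac{t}{e^t-1}$, recovering precisely the Bernoulli generating function as expected. Composing: the $U$-operator multiplies the symbol $e^{xt}$ by $\frac{e^t-1}{t}$ and the $B$-operator multiplies it by $\frac{t}{e^t-1}$, so the composition (in either order) multiplies by $1$, i.e. acts as the identity.

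From here the proof concludes by linearity and density. Given $g(x) = f(x+U)$, apply the $B$-operator to both sides; by the composition identity on each monomial/exponential, the right-hand side returns $f(x)$, yielding $f(x) = g(x+B)$. Conversely, starting from $f(x) = g(x+B)$ and applying the $U$-operator gives $g(x) = f(x+U)$. To make this rigorous at the level of formal power series rather than convergent integrals, I would expand $f$ (resp.\ $g$) as a power series, note that the $U$- and $B$-operators act term-by-term on $x^n$ sending it to a polynomial of the same degree with the same leading coefficient (so the operators are ``triangular'' and invertible on the space of power series), and verify the inverse relationship coefficient-by-coefficient using the exponential generating function computation above as a bookkeeping device.

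\medskip
The main obstacle is handling convergence and the passage between the formal-power-series statement and the integral identities: the Bernoulli operator $g \mapsto \tfrac{\pi}{2}\int g(x-\tfrac12+iv)\operatorname{sech}^2(\pi v)\,dv$ only converges for $g$ of subexponential growth in the imaginary direction (the Fourier integral \eqref{39822a} has $z$ restricted appropriately), so one must either restrict to ``admissible'' $g$ where the manipulations are legitimate, or do the entire argument formally on coefficients, using the exponential-generating-function identity $\frac{e^t-1}{t}\cdot\frac{t}{e^t-1}=1$ purely as an algebraic identity in $\mathbb{Q}[[t]]$. The rest — the two Fourier computations and the linearity argument — is routine.
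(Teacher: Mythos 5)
Your proposal is correct and follows essentially the same route as the paper: both verify the action on the exponential kernel $e^{xt}$ (the $U$-operator multiplying by $\tfrac{e^t-1}{t}$, the $B$-operator by $\tfrac{t}{e^t-1}$ via the $\operatorname{sech}^2$ Fourier integral \eqref{39822a}), and conclude by linearity that the composition fixes each monomial, i.e. $(x+B+U)^n=x^n$. Your framing as mutually inverse convolution operators and the remarks on admissibility are just a mild repackaging of the paper's generating-function argument.
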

\begin{proof}
In view of linearity, it suffices to consider the case $f(x) = x^{n}$.  Start with the generating function 
\begin{equation*}
\sum_{n=0}^{\infty} \frac{(x+U)^{n}}{n!} z^{n} =   e^{z(x+U)}   
  =  \int_{0}^{1} e^{z(x+u)} du 
  =  e^{zx} \frac{e^{z}-1}{z},
\end{equation*}
\noindent
and 
\begin{eqnarray*}
\sum_{n=0}^{\infty} \frac{(x+B)^{n}}{n!} z^{n} & = & e^{z(x+B)} = \frac{\pi}{2} 
\int_{-\infty}^{\infty} e^{z \left( x+ iv - \tfrac{1}{2} \right)} \text{sech}^{2}(\pi v) \, dv \\
& = & \frac{\pi}{2} e^{z \left( x - \tfrac{1}{2} \right) } \int_{-\infty}^{\infty} e^{izv} \text{sech}^{2}(\pi v) \, dv \\
& = & \pi e^{z \left( x - \tfrac{1}{2} \right) } \int_{0}^{\infty} \frac{ \cos(zv) }{\cosh^{2}(\pi v)}  \, dv.
\end{eqnarray*}
\noindent
The evaluation \eqref{39822b} gives 
\begin{equation}
\sum_{n=0}^{\infty} \frac{(x+B)^{n}}{n!} z^{n} = e^{zx} \frac{z}{e^{z}-1}.
\end{equation}
\noindent
Now assume first that $g(x) = f(x+U)$, i.e., $g(x) = (x+U)^{n}$. Then 
\begin{eqnarray*}
\sum_{n=0}^{\infty} \frac{g(x+B) z^{n}}{n!} & = & 
\sum_{n=0}^{\infty} \frac{(x+B+U)^{n}}{n!} z^{n} \\ 
& = & e^{z(x+B)} \frac{e^{z}-1}{z} \\
& = & e^{zx} \frac{z}{e^{z}-1} \frac{e^{z}-1}{z} = e^{zx}.
\end{eqnarray*}
\noindent
From here it follows that $(x+B+U)^{n} = x^{n}$.  The other direction is established in a similar form.
\end{proof}

\begin{note}
A direct extension gives the equivalence of the statements 
\begin{equation}
g(x) = f( x + U^{(\ell)}) \text{ and }f(x) = g(x + B^{(\ell)}), \quad \text{ for all } \ell \in \mathbb{N},
\label{inversion-11}
\end{equation}
\noindent
which can be proved by induction. 
\end{note}

\section{The generating function of the modified N\"{o}rlund polynomials}
\label{sec-bf-nor}

This  section  uses the results of the previous 
section to prove  an expression for the 
horizontal generating function of the modified N\"{o}rlund polynomials $B_{n}^{(\ell)*}$ as a 
formal power series. 
Here $\ell$ is a fixed positive integer. This generating function is defined by 
\begin{equation}
F_{B^{*}}(z;\ell) = \sum_{n=1}^{\infty} B_{n}^{(\ell)*}z^{n}.
\end{equation}

\begin{lemma}
\label{delta-1}
Let $\psi(x)$ be the digamma function and $H_{\ell}$ the $\ell$-th harmonic number. Then for $\ell \geq 1$ and $-1 \leq p \leq \ell-1$, 
\begin{equation}
\Delta^{\ell} \left[ \binom{x+p}{\ell} \psi(x) \right] = H_{\ell} + \psi(x+p+1).
\end{equation}
\end{lemma}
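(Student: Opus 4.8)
The plan is to prove the identity $\Delta^{\ell}\left[\binom{x+p}{\ell}\psi(x)\right] = H_{\ell} + \psi(x+p+1)$ by a combination of two reductions: first handle the action of $\Delta^{\ell}$ on a product via the discrete Leibniz rule, and then exploit that $\binom{x+p}{\ell}$ is a polynomial in $x$ of degree exactly $\ell$ when $-1 \le p \le \ell-1$, so most terms in the Leibniz expansion vanish.

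\medskip

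\noindent
\textbf{Step 1: discrete Leibniz rule.}  Recall that for the forward difference operator one has
\begin{equation*}
\Delta^{\ell}(f g)(x) = \sum_{k=0}^{\ell} \binom{\ell}{k} \left(\Delta^{k} f\right)(x+\ell-k)\,\left(\Delta^{\ell-k} g\right)(x).
\end{equation*}
Apply this with $f(x) = \binom{x+p}{\ell}$ and $g(x) = \psi(x)$.  Since $f$ is a polynomial of degree $\ell$ in $x$, $\Delta^{k}f$ has degree $\ell-k$; in particular $\Delta^{\ell}f$ is the constant $\ell!\cdot(\text{leading coefficient of }f) = 1$ (the leading coefficient of $\binom{x+p}{\ell} = \tfrac{1}{\ell!}(x+p)(x+p-1)\cdots(x+p-\ell+1)$ is $1/\ell!$), and $\Delta^{k}f$ for $k>\ell$ vanishes.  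So the sum over $k$ truncates and we must still carry along the factors $\Delta^{\ell-k}\psi$.

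\medskip

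\noindent
\textbf{Step 2: kill the unwanted terms.}  Here is where the constraint $-1 \le p \le \ell-1$ does the work.  I expect the key observation to be that $\Delta^{m}\psi(x)$, for $m \ge 1$, equals $\Delta^{m-1}\!\left(\tfrac{1}{x}\right) = (-1)^{m-1}(m-1)!\,/\,[x(x+1)\cdots(x+m-1)]$, using $\psi(x+1)-\psi(x)=1/x$.  Meanwhile $\binom{x+p}{\ell}$ and its differences vanish at a string of consecutive integers determined by $p$ and $\ell$.  The plan is to show that when $k < \ell$, the coefficient polynomial $\left(\Delta^{k}f\right)(x+\ell-k)$ — a polynomial of degree $\ell-k \ge 1$ — is divisible by the denominator $x(x+1)\cdots(x+\ell-k-1)$ appearing in $\Delta^{\ell-k}\psi(x)$, so that the product $\left(\Delta^{k}f\right)(x+\ell-k)\,\left(\Delta^{\ell-k}\psi\right)(x)$ is actually a polynomial; then a degree count (degree $\le 0$) forces it to be the appropriate constant, and summing these constants should telescope to $H_{\ell}$.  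Concretely I anticipate the cleanest route is induction on $\ell$: assume the formula for $\ell-1$ with all admissible $p$, write $\Delta^{\ell}[\,\cdot\,] = \Delta^{\ell-1}\Delta[\,\cdot\,]$, expand the inner $\Delta\!\left[\binom{x+p}{\ell}\psi(x)\right]$ using $\Delta(fg) = (\Delta f)(x+1)\psi(x) + f(x)(\Delta\psi)(x) = \binom{x+p}{\ell-1}\psi(x+1) + \binom{x+p}{\ell}\cdot\tfrac{1}{x}$, then rewrite $\binom{x+p}{\ell}/x$ and $\binom{x+p}{\ell-1}\psi(x+1)$ in a form to which the induction hypothesis applies (after shifting $x \mapsto x+1$ and adjusting $p$).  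The harmonic-number recursion $H_{\ell} = H_{\ell-1} + 1/\ell$ should emerge from the extra $1/x$ term.

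\medskip

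\noindent
\textbf{Main obstacle.}  The delicate point is bookkeeping the shift in the parameter $p$: when one pulls out a $\Delta$ and re-expresses $\binom{x+p}{\ell}/x$, the natural identity $\binom{x+p}{\ell} = \tfrac{x+p-\ell+1}{\ell}\binom{x+p}{\ell-1}$ (or $= \tfrac{x}{\ell}\binom{x+p}{\ell-1} + (\text{lower})$) does not cleanly produce a term of the form $\binom{x'+p'}{\ell-1}\psi(x')$ with $p'$ in the admissible range $-1 \le p' \le \ell-2$, so one may need to split into the boundary cases $p=-1$, $p=\ell-1$ and the interior $0 \le p \le \ell-2$ separately, or to prove a slightly stronger identity that also records what happens when $p$ falls outside the stated range (where $\binom{x+p}{\ell}$ acquires extra integer roots and the answer picks up a rational-function correction). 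Verifying the base case $\ell=1$, namely $\Delta\!\left[(x+p)\psi(x)\right] = H_1 + \psi(x+p+1) = 1 + \psi(x+p+1)$ for $p \in \{-1,0\}$, is immediate from $\Delta[(x+p)\psi(x)] = (x+1+p)\psi(x+1) - (x+p)\psi(x) = \psi(x+1) + (x+p)/x$, which equals $1 + \psi(x+1)$ when $p=0$ and $\psi(x+1) + (x-1)/x = \psi(x) + 1/x + 1 - 1/x = \psi(x)+1$... — so the two boundary values of $p$ really do behave differently and the induction must track this.
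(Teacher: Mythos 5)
Your Step 2 rests on a claim that is false: for $k<\ell$ the Leibniz term $\bigl(\Delta^{k}f\bigr)(x+\ell-k)\,\bigl(\Delta^{\ell-k}\psi\bigr)(x)$ is in general \emph{not} a polynomial, let alone a constant. Indeed $\Delta^{k}\binom{x+p}{\ell}=\binom{x+p}{\ell-k}$, so the coefficient in question is $\binom{x+\ell-k+p}{\ell-k}$, whose roots are $x=-p-1,\dots,-p-(\ell-k)$, while the denominator of $\Delta^{\ell-k}\psi(x)$ vanishes at $x=0,-1,\dots,-(\ell-k-1)$; these root sets coincide only when $p=-1$. Already for $\ell=1$, $p=0$ the $k=0$ term is $(x+1)\cdot\tfrac1x$, not a constant. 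Moreover the $k<\ell$ terms cannot all be constants summing to $H_{\ell}$, because their total must equal $H_{\ell}+\psi(x+p+1)-\psi(x)=H_{\ell}+\sum_{j=0}^{p}\tfrac{1}{x+j}$ (for $p\ge 0$), which is non-constant; the $p$-dependent shift in the argument of $\psi$ has to be produced by these very terms. So the Leibniz route requires the non-polynomial parts to recombine in a way you have not shown.

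Your fallback, induction on $\ell$, is only sketched, and the point you yourself flag as the ``main obstacle'' (how $p$ shifts when one $\Delta$ is peeled off, and what to do when it leaves the admissible range) is exactly where the content of the lemma lies; checking the base case $\ell=1$ (which, contrary to your closing remark, does come out right for both $p=0$ and $p=-1$) does not close that gap. For comparison, the paper's proof avoids this bookkeeping entirely: it first treats $p=0$ by setting $h_{\ell}(x)=\Delta^{\ell}\bigl[\binom{x}{\ell}\psi(x)\bigr]$ and showing $h_{\ell+1}(x)-h_{\ell}(x)=\tfrac{1}{\ell+1}$, using $\psi(x+1)-\psi(x)=\tfrac1x$, Pascal's rule, and the fact that $\Delta^{\ell}$ annihilates polynomials of degree less than $\ell$; it then reduces general $p$ to $p=0$ by observing that $\binom{x+p}{\ell}\bigl(\psi(x+p)-\psi(x)\bigr)$ is a polynomial of degree $\ell-1$ whenever $-1\le p\le \ell-1$, $p\neq 0$, hence is killed by $\Delta^{\ell}$, so that $\Delta^{\ell}\bigl[\binom{x+p}{\ell}\psi(x)\bigr]=h_{\ell}(x+p)=H_{\ell}+\psi(x+p+1)$. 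Some such device for trading $\psi(x)$ for $\psi(x+p)$ is precisely what your outline is missing.
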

\begin{proof}
The result is established first for $p=0$. Define 
\begin{equation}
h_{\ell}(x) = \Delta^{\ell} \left[ \binom{x}{\ell} \psi(x) \right]
\end{equation}
\noindent
and observe that 
\begin{eqnarray*}
h_{\ell+1}(x) - h_{\ell}(x) & = & \Delta^{\ell+1} \left[ \binom{x}{\ell+1}  \psi(x) \right] - 
\Delta^{\ell} \left[ \binom{x}{\ell} \psi(x) \right] \\
& = & \Delta^{\ell} \Delta  \left[ \binom{x}{\ell+1}  \psi(x) \right]  - \Delta^{\ell} \left[ \binom{x}{\ell} \psi(x) \right] \\
& = & \Delta^{\ell} \left[ \binom{x+1}{\ell+1} \psi(x+1) - \binom{x}{\ell+1} \psi(x) - \binom{x}{\ell} \psi(x) \right].
\end{eqnarray*}
\noindent
The identity 
\begin{equation}
\binom{x+1}{\ell+1} = \binom{x}{\ell+1} + \binom{x}{\ell} 
\end{equation}
\noindent 
gives 
\begin{eqnarray*}
h_{\ell+1}(x) - h_{\ell}(x) & = & \Delta^{\ell} \left[ \binom{x}{\ell+1} \left( \psi(x+1) - \psi(x) \right) + 
\binom{x}{\ell} \left( \psi(x+1) - \psi(x) \right) \right] \\
& = & \Delta^{\ell} \left[  \frac{1}{x} \left( \binom{x}{\ell+1} + \binom{x}{\ell} \right) \right] \\
& = & \Delta^{\ell} \left[ \frac{(x-1) \cdots (x-\ell)}{(\ell+1)!} + 
\frac{(x-1) \cdots (x-\ell+1)}{ \ell!} \right]
\end{eqnarray*}
\noindent
The second fraction is a polynomial in $x$ of degree $\ell-1$. Therefore $\Delta^{\ell}$ annihilates it. The 
first fraction is a polynomial of degree $\ell$ and only its leading term survives the application of  $\Delta^{\ell}$. This leads to 
the difference equation
\begin{equation}
h_{\ell+1}(x) - h_{\ell}(x) = \Delta^{\ell} \frac{x^{\ell}}{(\ell+1)!} = \frac{1}{\ell+1},
\label{recu-110}
\end{equation}
\noindent
since $\Delta^{\ell} x^{\ell} = \ell!$. The latter follows directly from Lemma \ref{del-anti}:  indeed,  choosing $f(x) = 1$ produces 
$F^{(\ell)}(x) = x^{\ell}/\ell!$ and therefore 
\begin{equation}
\Delta^{\ell} \frac{x^{\ell}}{\ell!} = f( x + U^{(\ell-1)}) = 1,
\end{equation}
\noindent 
which gives the result. Now write \eqref{recu-110} as 
\begin{equation}
h_{\ell+1}(x) - \psi(\ell+2) = h_{\ell}(x) - \psi(\ell+1),
\end{equation}
\noindent
so that 
\begin{equation}
h_{\ell}(x) = h_{1}(x) + \psi(\ell+1) - \psi(2).
\label{form-final-h}
\end{equation}
\noindent
Now
\begin{equation}
h_{1}(x) = \Delta \left[ \binom{x}{1} \psi(x) \right]  = 1 + \psi(x+1)\label{h1}
\end{equation}
\noindent
gives the stated result for $p=0$. 

Now assume $p \neq 0$ and that $1 \leq p \leq \ell-1$. Observe that 
\begin{eqnarray*}
\Delta^{\ell} \left[ \binom{x+p}{\ell} \left( \psi(x+p) - \psi(x) \right) \right] & = & 
\Delta^{\ell} \left[ \binom{x+p}{\ell} \left( \frac{1}{x+p-1} + \cdots + \frac{1}{x} \right) \right] \\
& = & \frac{1}{\ell!} \Delta^{\ell} \left[ \prod_{u=0}^{\ell-1} (x+ p-u) \times  \left( \frac{1}{x+p-1} + \cdots + \frac{1}{x} \right)       \right]
\end{eqnarray*}
\noindent
The bounds $1 \leq p \leq \ell-1$ show that the last expression is actually a polynomial of degree $\ell-1$. One can also easily check that when $p=-1$, $\binom{x+p}{\ell} \left( \psi(x+p) - \psi(x) \right)$ is a polynomial of degree $\ell-1$. This 
implies that for $-1 \leq p \leq \ell-1, p\neq 0$,
\begin{equation}
\Delta^{\ell} \left[ \binom{x+p}{\ell} \left( \psi(x+p) - \psi(x) \right) \right] = 0.
\end{equation}
\noindent
It follows that 
\begin{eqnarray*}
\Delta^{\ell} \left[ \binom{x+p}{\ell} \psi(x) \right] & = & \Delta^{\ell} \left[ \binom{x+p}{\ell} \psi(x+p) \right]  \\ 
& = & h_{\ell}(x+p) \\
& = & H_{\ell} + \psi(x+p+1),
\end{eqnarray*}
\noindent
as can be seen from \eqref{form-final-h} and \eqref{h1}. This completes the argument.
\end{proof}

The proof of Theorem \ref{eval-mess00} is given next.

\begin{proof}
Using the symbolic operator $B$, the left-hand side of Theorem \ref{eval-mess00} can be written as $\log(x+B^{(\ell)})$. Let $f(x)$ denote
the right-hand side of Theorem \ref{eval-mess00}, i.e.,
\begin{align}
f(x) =-H_{\ell-1} +  \frac{d^{\ell-1}}{dx^{\ell-1}}  \left[ \binom{x-1}{\ell-1} \psi\left(x-\left\lfloor \frac{\ell}{2} \right\rfloor\right) \right].
\label{def-ff}
\end{align}
Using \eqref{inversion-11}, it suffices to prove
\begin{equation*}
f\left(x+U^{(\ell-1)}\right)=\log\left(x+B\right).
\end{equation*}
However from  \eqref{inversion-1},
\begin{equation}
\log \left( x+ B \right)= \psi \left( x \right).
\end{equation}
\noindent 
So we only need to show that
\begin{equation*}
f\left(x+U^{(\ell-1)}\right)=\psi(x).
\end{equation*}
Now Lemma \ref{del-anti} gives
\begin{equation}
f( x + U^{(\ell-1)}) = \Delta^{\ell-1} F^{(\ell-1)}(x),
\label{form-314}
\end{equation}
\noindent
and writing $f(x)$ as 
\begin{equation}
f(x) = \frac{d^{\ell-1}}{dx^{\ell-1}} \left[ \binom{x-1}{\ell-1} \psi\left(x-\left\lfloor \frac{\ell}{2} \right\rfloor\right) - \frac{x^{\ell-1}}{(\ell-1)!} H_{\ell-1} \right]
\end{equation}
\noindent
produces 
\begin{equation}
F^{(\ell-1)}(x) = \binom{x-1}{\ell-1} \psi\left(x-\left\lfloor \frac{\ell}{2} \right\rfloor\right) - \frac{x^{\ell-1}}{(\ell-1)!} H_{\ell-1}.
\end{equation}
\noindent
Then 
\begin{equation}
\Delta^{\ell-1} F^{(\ell-1)}(x) = -H_{\ell-1} + \Delta^{\ell-1} 
\left[ \binom{x-1}{\ell-1} \psi\left(x-\left\lfloor \frac{\ell}{2} \right\rfloor\right)\right]
\end{equation}
\noindent
and \eqref{form-314} gives
\begin{equation}
f(x + U^{(\ell-1)} ) = - H_{\ell-1} + \Delta^{\ell-1} \left[ \binom{x-1}{\ell-1} \psi\left(x-\left\lfloor \frac{\ell}{2} \right\rfloor\right)\right].
\end{equation}
\noindent 
Now Lemma \ref{delta-1}, with $\ell$ replaced by $\ell-1$, $x$ replaced by $x-\left\lfloor \tfrac{\ell}{2} \right\rfloor$ and $p = \lfloor \tfrac{\ell}{2} \rfloor - 1$, yields 
\begin{equation}
f(x + U^{(\ell-1)}) = \psi(x).
\end{equation}
This completes the proof.
\end{proof}

The  proof of Theorem \ref{gen-fun-modber-0}, which expresses  the generating function for the modified N\"{o}rlund 
polynomials,  is now given.

\begin{proof}
The proof of the identity 
\begin{equation}
F_{B^{*}}(z) = \sum_{n=1}^{\infty} B_{n}^{*} z^{n} = - \text{eval} \left\{ \tfrac{1}{2} \log \left((1-z)^{2} - z \mathfrak{B} \right) \right\}
\end{equation}
\noindent
given in \cite[Equation (3.4)]{dixit-2014a} can be adapted  to derive, in a similar manner, the relation
\begin{equation}
F_{B^{*}}(z; \ell)  = \sum_{n=1}^{\infty} B_{n}^{(\ell)*}z^{n} = 
- \text{eval} \left\{ \frac{1}{2} \log \left( (1-z)^{2} - z \mathfrak{B}^{(\ell)} \right) \right\}.
\end{equation}
\noindent
This is described next. Basic facts of umbral calculus, namely $- \mathfrak{B}^{(\ell)} = \mathfrak{B}^{(\ell)}  + \ell$ and 
$x + \mathfrak{B}^{(\ell)} = \mathfrak{B}^{(\ell)}(x)$, give 
\begin{eqnarray}\label{eval-gen}
\sum_{n=1}^{\infty} B_{n}^{(\ell)*} z^{n} & = & - \frac{1}{2} \log z - 
\frac{1}{2} \text{eval} \left\{ \log \left( z + \frac{1}{z} - 2 - \mathfrak{B}^{(\ell)} \right) \right\} \\
& = & - \frac{1}{2} \log z - \frac{1}{2}  \text{eval} \left\{ \log \left( z + \frac{1}{z} - 2 + \mathfrak{B}^{(\ell)} + \ell  \right) \right\} 
\nonumber \\
& = & - \frac{1}{2} \log z - \frac{1}{2}  \text{eval} \left\{ \log  \mathfrak{B}^{(\ell)} \left( z + \frac{1}{z} + \ell  - 2  \right) \right\}.
\nonumber 
\end{eqnarray}
\noindent
The final step uses Theorem \ref{eval-mess00}. 
\end{proof}
The proof of Theorem  \ref{coro-nice1} is presented next.

\noindent
\begin{proof}
Start with 
\begin{eqnarray*}
\text{eval}\{ \log \mathfrak{B}^{( \ell )}(x)\} & = & 
\text{eval}\{ \log \left(  x + \mathfrak{B}_{1} + \cdots + \mathfrak{B}_{\ell} \right)  \}\\
& = & \mathbb{E} \left[ \log \left( x - \tfrac{\ell}{2} + i (L_{B_{1}} + \cdots + L_{B_{\ell}} ) \right) \right].
\end{eqnarray*}
\noindent
Introduce the notation $L = L_{B_{1}} + \cdots + L_{B_{\ell}}$ and since the density $\rho_{\ell}$ is an 
even function, $L$ and $-L$ have the same distribution.  Therefore, with $b = (x - \ell/2)^{-2}$, 
\begin{eqnarray}\label{eval-int}
\text{eval}\{ \log \mathfrak{B}^{( \ell )}(x) \}& = & \frac{1}{2} \mathbb{E} \left[ \log \left( 
\left( x - \frac{\ell}{2} \right)^{2}  + L^{2} \right) \right] \nonumber\\
& = & \log \left| x - \frac{\ell}{2} \right| + \frac{1}{2} \mathbb{E} \left[ \log( 1 + bL^{2} ) \right] \nonumber\\
& = & \log \left| x - \frac{\ell}{2} \right| + \frac{1}{2} \int_{-\infty}^{\infty} \log(1 + bu^{2}) \rho_{\ell}(u) \, du\nonumber\\
& = & \log \left| x - \frac{\ell}{2} \right| + \int_{0}^{\infty} \log(1 + bu^{2}) \rho_{\ell}(u) \, du,
\end{eqnarray}
\noindent
since $\rho_{\ell}$ is an even function of $u$. The result now follows from Theorem \ref{eval-mess00}.
\end{proof}

\section{A family of densities and a differential-difference equation}
\label{sec-family}

This section discusses the densities $\rho_{n}(x)$ defined by the recurrence 
\begin{equation}
\rho_{n}(x) = \int_{-\infty}^{\infty} \rho_{n-1}(y) \rho_{1}(x-y) dy
\end{equation}
\noindent 
with initial condition 
\begin{equation}
\rho_{1}(x) = \rho(x) = \frac{\pi}{2} \text{sech}^{2}(\pi x).
\label{density-s1}
\end{equation}
\noindent
These densities provide the evaluation 
\begin{equation}
\mathbb{E} \left[ g \left(  x - \tfrac{\ell}{2} + i \sum_{j=1}^{\ell} B_{j} \right) \right]  
= \int_{-\infty}^{\infty} g( x - \tfrac{1}{2} + i v) \rho_{\ell}(v) dv.
\end{equation}
\noindent
In particular, the generating function of the N\"{o}rlund polynomials is linked to these densities via Theorem \ref{gen-fun-density}.
Some properties of these densities are described next. 

 \smallskip
 
 \begin{lemma}
 The Fourier transform of $\rho_{1}(x)$ is given by 
 \begin{equation}
  \widehat{\rho_{1}} (\xi) = \frac{\pi \xi}{\sinh \pi \xi}.
 \end{equation}
 \end{lemma}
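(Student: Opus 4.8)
I want to compute $\widehat{\rho_1}(\xi) = \int_{-\infty}^{\infty} \rho_1(x)\, e^{-i\xi x}\, dx$ directly from the explicit formula $\rho_1(x) = \tfrac{\pi}{2}\operatorname{sech}^2(\pi x)$. Since $\rho_1$ is even, this equals $\pi\int_0^\infty \operatorname{sech}^2(\pi x)\cos(\xi x)\, dx$, and this is precisely the integral \eqref{39822a} (equivalently \eqref{39822b} with $\beta = \pi$, $a = \xi$) already recorded in the excerpt. Applying that entry with $a = \xi$ and $\beta = \pi$ gives
\begin{equation*}
\int_0^\infty \frac{\cos(\xi x)}{\cosh^2(\pi x)}\, dx = \frac{\pi \xi}{2\pi^2}\left(\sinh\frac{\xi}{2}\right)^{-1} = \frac{\xi}{2\pi \sinh(\xi/2)},
\end{equation*}
so that $\widehat{\rho_1}(\xi) = \pi \cdot \dfrac{\xi}{2\pi\sinh(\xi/2)} = \dfrac{\xi}{2\sinh(\xi/2)}$.

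**Reconciling with the stated form.** The lemma asserts $\widehat{\rho_1}(\xi) = \dfrac{\pi\xi}{\sinh \pi\xi}$, which differs from what the direct computation yields; the discrepancy is a matter of the normalization convention for the Fourier transform. With the convention $\widehat{f}(\xi) = \int f(x) e^{-2\pi i \xi x}\, dx$, one substitutes $\xi \mapsto 2\pi\xi$ in the computation above, giving $\widehat{\rho_1}(\xi) = \dfrac{2\pi\xi}{2\sinh(\pi\xi)} = \dfrac{\pi\xi}{\sinh\pi\xi}$, which matches the statement exactly. So the proof is: write the transform as a cosine integral by evenness, invoke \eqref{39822a}/\eqref{39822b}, and simplify, being careful to use the same Fourier normalization as the one implicit in the convolution relation \eqref{convo-1} used elsewhere in the paper.

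**Main obstacle.** There is essentially no analytic difficulty here — the key integral is cited from Gradshteyn--Ryzhik (entry $3.982.2$, with a proof promised in \cite{dixit-2014f}), and the rest is algebraic simplification. The only genuine point requiring care is bookkeeping of constants: ensuring the Fourier-transform convention is fixed consistently so that (i) $\widehat{\rho_1}$ comes out as $\pi\xi/\sinh\pi\xi$, and (ii) the convolution identity $\widehat{\rho_\ell} = (\widehat{\rho_1})^\ell$ holds without stray factors of $2\pi$ — this is what makes the lemma useful in the sequel. Once the convention is pinned down, the proof is a two-line computation: reduce to the cosine integral and quote \eqref{39822a}.
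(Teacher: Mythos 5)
Your proposal is correct and follows essentially the same route as the paper: the paper also uses the convention $\widehat{\rho_1}(\xi)=\int_{-\infty}^{\infty}\rho_1(x)e^{-2\pi i x\xi}\,dx$, reduces by evenness to $\pi\int_0^\infty \cos(2\pi x\xi)\operatorname{sech}^2(\pi x)\,dx$, and quotes \eqref{39822a}. Your initial computation under the $e^{-i\xi x}$ convention and subsequent correction just make explicit the normalization bookkeeping the paper takes for granted; the substance is identical.
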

 \begin{proof}
 The Fourier transform is given by 
 \begin{eqnarray*}
   \widehat{\rho_{1}} (\xi) & = & \int_{-\infty}^{\infty} \frac{\pi}{2} \text{sech}^{2}(\pi x) e^{-2 \pi i x \xi} \, dx \\
& = & \pi \int_{0}^{\infty} \frac{\cos( 2 \pi x \xi)}{\cosh^{2}(\pi x)} \, dx,
\end{eqnarray*}
\noindent 
and the result follows by using \eqref{39822a}.  
 \end{proof}
 
\begin{corollary}
The  Fourier transform of $\rho_{\ell}(x)$ is given by 
\begin{equation}
 \widehat{\rho_{\ell}} (\xi)=  \left( \frac{\pi \xi}{\sinh \pi \xi} \right)^{\ell}.
\end{equation}
\end{corollary}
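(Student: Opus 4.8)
The plan is to derive the corollary directly from the preceding lemma by an induction on $\ell$, using the convolution theorem for the Fourier transform. The key structural input is the defining recurrence $\rho_{\ell}(x) = \int_{-\infty}^{\infty} \rho_{\ell-1}(y)\rho_{1}(x-y)\,dy$, which says precisely that $\rho_{\ell} = \rho_{\ell-1} * \rho_{1}$. Since the Fourier transform used here is normalized as $\widehat{f}(\xi) = \int_{-\infty}^{\infty} f(x) e^{-2\pi i x\xi}\,dx$, convolution is turned into ordinary pointwise multiplication with no stray constants: $\widehat{\rho_{\ell}}(\xi) = \widehat{\rho_{\ell-1}}(\xi)\,\widehat{\rho_{1}}(\xi)$.

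First I would record the base case $\ell = 1$, which is exactly the content of the Lemma just proved: $\widehat{\rho_{1}}(\xi) = \dfrac{\pi\xi}{\sinh \pi\xi}$. Next I would assume, as the inductive hypothesis, that $\widehat{\rho_{\ell-1}}(\xi) = \left(\dfrac{\pi\xi}{\sinh \pi\xi}\right)^{\ell-1}$ for some $\ell \geq 2$. Applying the convolution identity to $\rho_{\ell} = \rho_{\ell-1} * \rho_{1}$ and then substituting both the inductive hypothesis and the Lemma gives
\[
\widehat{\rho_{\ell}}(\xi) = \widehat{\rho_{\ell-1}}(\xi)\,\widehat{\rho_{1}}(\xi) = \left(\frac{\pi\xi}{\sinh \pi\xi}\right)^{\ell-1}\cdot\frac{\pi\xi}{\sinh \pi\xi} = \left(\frac{\pi\xi}{\sinh \pi\xi}\right)^{\ell},
\]
which closes the induction and proves the claim.

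The only point requiring any care — and it is minor rather than a genuine obstacle — is the justification of the convolution theorem in this setting: one should note that $\rho_{1}$ is a bona fide probability density (nonnegative, integrable, with $\widehat{\rho_{1}}(0) = 1$), so each $\rho_{\ell}$ is again an $L^{1}$ probability density and the Fourier transform of the convolution is the product of the transforms in the classical sense. Equivalently, since $\rho_{\ell}$ is the density of the sum $L_{B_{1}} + \cdots + L_{B_{\ell}}$ of independent copies of $L_{B}$, its characteristic function is the $\ell$-th power of that of $L_{B}$, which is the same statement. No delicate estimates or convergence issues arise, so the proof is essentially immediate once the convolution theorem is invoked.
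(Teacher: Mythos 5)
Your proposal is correct and matches the paper's argument, which simply invokes the fact that the Fourier transform turns convolutions into products; you merely spell out the induction on $\ell$ and the integrability justification that the paper leaves implicit. No further comment is needed.
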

\begin{proof}
This follows directly from the fact that Fourier transform converts convolutions into products.
\end{proof}

The Fourier inversion formula now gives a representation for the density $\rho_{\ell}(x)$ as 
\begin{equation}
\rho_{\ell}(x) = \frac{1}{\pi} \int_{-\infty}^{\infty}  \left( \frac{y}{\sinh y} \right)^{\ell} e^{2  i x y} \, dy.
\label{form-fn}
\end{equation}

\begin{note}
J.~Pitman and M.~Yor \cite[p.~299]{pitman-2003a} studied the function 
\begin{equation}
\phi_{\ell}(x) = \frac{1}{2 \pi} \int_{-\infty}^{\infty} \left( \frac{y}{\sinh y} \right)^{\ell} e^{i x y } dy
\end{equation}
\noindent
as part of their study on infinitely divisible distributions generated by L\'{e}vy processes associated 
with hyperbolic functions. The expression \eqref{form-fn} shows that 
\begin{equation}
\rho_{\ell}(x) = 2 \phi_{\ell}(2x).
\label{relation-phi}
\end{equation}
\noindent
These authors show that $\phi_{t}$ satisfies the differential-difference equation
\begin{equation}
\label{recu-1}
\ell(\ell+1) \phi_{\ell+2}(x) = (x^{2}+\ell^{2}) \phi_{\ell}''(x) + 2(\ell+2) x \phi_{\ell}'(x) + (\ell+1)(\ell+2) \phi_{\ell}(x).
\end{equation}
\end{note}

\begin{note}
The authors of \cite{pitman-2003a} also consider the transform 
\begin{equation}
\psi_{\ell}(x)  = \frac{1}{2 \pi} \int_{-\infty}^{\infty} \left( \frac{1}{\cosh y} \right)^{\ell} e^{ixy} dy
\end{equation}
\noindent
and prove the explicit formulae 
\begin{equation}
\psi_{\ell}(x) = \frac{2^{\ell-2}}{\pi \Gamma(\ell)} \left| \Gamma \left( \frac{\ell +i x}{2} \right) \right|^{2}.
\label{expl-1}
\end{equation}
\noindent
Then, they state  `\textit{we do not know of any explicit formula for $\phi_{\ell}$ like} \eqref{expl-1}  \textit{valid for 
general} $\ell> 0$'.
\end{note}

\begin{note}
The density functions $\rho_{\ell}(x)$ have also appeared in Airault \cite[p. 2109, (1.52), (1.53)]{airault-2008a}. This author 
proves that 
\begin{eqnarray}
\rho_{2\ell}(x) & = &  \frac{\pi^{1-2 \ell }}{2(2 \ell-1)!} \frac{d^{2 \ell}}{dx^{2 \ell}} \left[ \frac{Q_{2 \ell-1}(\pi x)}{\tanh(\pi x)}  \right]  
\label{airault-1} \\
& & \nonumber \\
\rho_{2 \ell+1}(x) & = & \frac{\pi^{-2 \ell}}{2(2 \ell)!} \frac{d^{2 \ell+1}}{dx^{2 \ell+1}} \left[ Q_{2 \ell}(\pi x) \tanh(\pi x) \right] 
\nonumber 
\end{eqnarray}
\noindent
where 
\begin{equation}
Q_{2 \ell}(x) = \prod_{\begin{stackrel} {1 \leq j \leq 2 \ell-1}{j \text{ odd}} \end{stackrel}} 
\left( x^{2} + \frac{\pi^{2} j^{2}}{4} \right)
\end{equation}
\noindent
and 
\begin{equation}
Q_{2 \ell+1}(x) = x \prod_{1 \leq j \leq \ell} (x^{2} + j^{2} \pi^{2}). 
\end{equation}
\end{note}

The differential-difference equation \eqref{recu-1} produces 
\begin{equation}
\ell(\ell+1) \rho_{\ell+2}(x) = \frac{(4x^{2}+\ell^{2})}{4} \rho_{\ell}''(x) + 2x(\ell+2) \rho_{\ell}'(x) + (\ell+1)(\ell+2)\rho_{\ell}(x),
\label{recu-2}
\end{equation}
\noindent
so that  $\rho_{\ell}(x)$ can be  obtained from \eqref{recu-2} and the initial conditions 
$\rho_{1}(x)$ in \eqref{density-s1} and $\rho_{2}(x)$. Even though the expression for $\rho_2(x)$ is 
well-known \cite[p.~312, Table 6]{pitman-2003a}, it is derived here for the sake of completeness.

\begin{lemma}
 The density function $\rho_{2}$ is given by 
 \begin{equation}
 \rho_{2}(x) = \frac{\pi}{\sinh^{2}(\pi x)} \left( \pi x \, \coth(\pi x) - 1 \right).\label{density-s2}
 \end{equation}
 \end{lemma}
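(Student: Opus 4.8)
The plan is to compute $\rho_2$ by inverting its Fourier transform, which by the Corollary is $\widehat{\rho_2}(\xi) = \left(\dfrac{\pi\xi}{\sinh\pi\xi}\right)^2$. Equivalently, using \eqref{form-fn} with $\ell = 2$, one has
\begin{equation*}
\rho_2(x) = \frac{1}{\pi}\int_{-\infty}^{\infty}\left(\frac{y}{\sinh y}\right)^2 e^{2ixy}\,dy.
\end{equation*}
Rather than evaluate this integral by residues directly (the double pole structure of $1/\sinh^2 y$ makes the residue bookkeeping unpleasant), I would instead verify the claimed formula by checking that the right-hand side of \eqref{density-s2} is a probability density whose Fourier transform equals $(\pi\xi/\sinh\pi\xi)^2$, or — more in the spirit of the paper — compute the convolution $\rho_2 = \rho_1 * \rho_1$ directly from \eqref{density-s1}.

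The cleanest route is the direct convolution. Write
\begin{equation*}
\rho_2(x) = \frac{\pi^2}{4}\int_{-\infty}^{\infty}\operatorname{sech}^2(\pi y)\,\operatorname{sech}^2(\pi(x-y))\,dy.
\end{equation*}
The standard trick is to use the identity $\operatorname{sech}^2(\pi y) = \frac{1}{\pi}\frac{d}{dy}\tanh(\pi y)$ and integrate by parts, transferring one derivative onto the other factor, or alternatively to use the partial-fraction-type decomposition of the product $\operatorname{sech}^2(\pi y)\operatorname{sech}^2(\pi(x-y))$ in terms of $\operatorname{sech}^2$ and $\tanh$ of $\pi y$ and $\pi(x-y)$ with coefficients depending on $x$ through $\sinh(\pi x)$ and $\coth(\pi x)$. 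Concretely, using the addition formula one shows that $\operatorname{sech}^2(\pi y)\operatorname{sech}^2(\pi(x-y))$ can be written as a linear combination (with $x$-dependent coefficients) of $\operatorname{sech}^2(\pi y)$, $\operatorname{sech}^2(\pi(x-y))$, $\tanh(\pi y)$, and $\tanh(\pi(x-y))$; integrating each term over $y\in\mathbb{R}$ is then elementary, since $\int\operatorname{sech}^2(\pi y)\,dy = \frac{2}{\pi}$ and the $\tanh$ terms combine to produce the $\pi x\coth(\pi x)$ piece after the integration-by-parts boundary terms are accounted for.

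The main obstacle is purely computational: organizing the partial-fraction decomposition of the product of two shifted $\operatorname{sech}^2$ factors so that the $y$-integration becomes trivial, and keeping track of the coefficients, which involve $1/\sinh^2(\pi x)$ and $\coth(\pi x)$ and are singular at $x=0$ (the singularities must cancel, leaving the finite value $\rho_2(0) = \pi^2/3$, consistent with expanding \eqref{density-s2} near $x=0$). As a sanity check I would confirm the normalization $\int_{-\infty}^{\infty}\rho_2(x)\,dx = 1$ (immediate from $\widehat{\rho_2}(0)=1$) and verify the small-$x$ expansion of \eqref{density-s2} is regular. An alternative, if the convolution algebra proves too messy, is to start from Airault's formula \eqref{airault-1} with $\ell = 1$, which gives $\rho_2(x) = \frac{\pi^{-1}}{2}\frac{d^2}{dx^2}\left[Q_1(\pi x)/\tanh(\pi x)\right]$ with $Q_1(x) = x$, i.e. $\rho_2(x) = \frac{1}{2\pi}\frac{d^2}{dx^2}\left[\pi x\coth(\pi x)\right]$, and then carry out the two differentiations and simplify using $\frac{d}{dx}\coth(\pi x) = -\pi/\sinh^2(\pi x)$; this reduces the whole lemma to a short, purely mechanical computation.
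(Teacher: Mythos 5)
Your proposal is correct, and your main route is the same as the paper's: compute $\rho_2=\rho_1*\rho_1$ directly. The difference is in how the convolution integral is executed. The paper substitutes $w=e^{2t}$, turning $\int_{\mathbb{R}}\operatorname{sech}^2 t\,\operatorname{sech}^2(\pi x-t)\,dt$ into a rational integral $2\pi\int_0^\infty \frac{w\,dw}{(w+1)^2(\alpha+\beta w)^2}$ with $\alpha=e^{\pi x}$, $\beta=e^{-\pi x}$, and finishes by ordinary partial fractions; you instead propose decomposing the hyperbolic product in place. That decomposition does exist and is clean: with $c=\pi x$,
\begin{equation*}
\operatorname{sech}^2 t\,\operatorname{sech}^2(c-t)=\frac{1}{\sinh^2 c}\Bigl[\,2\coth c\,\bigl(\tanh t+\tanh(c-t)\bigr)-\operatorname{sech}^2 t-\operatorname{sech}^2(c-t)\Bigr],
\end{equation*}
which follows from $\tanh t+\tanh(c-t)=\sinh c/(\cosh t\cosh(c-t))$; each $\operatorname{sech}^2$ integrates to $2$, and the $\tanh$ combination has antiderivative $\log\cosh t-\log\cosh(c-t)$, giving $2c$ directly (no integration by parts or boundary bookkeeping is actually needed), so the two executions are comparably short. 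Your alternative via Airault's formula, $\rho_2(x)=\frac{1}{2\pi}\frac{d^2}{dx^2}\bigl[\pi x\coth(\pi x)\bigr]$, does reproduce \eqref{density-s2} after two differentiations, but it imports a result the paper only quotes from the literature, which runs against the stated purpose of deriving $\rho_2$ for completeness. Two small corrections to your side remarks: the regular value at the origin is $\rho_2(0)=\pi/3$, not $\pi^2/3$ (both from the small-$x$ expansion of \eqref{density-s2} and from $\frac{\pi^2}{4}\int\operatorname{sech}^4(\pi u)\,du$), and the normalization check indeed follows from $\widehat{\rho_2}(0)=1$ as you say.
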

 \begin{proof}
 The relation \eqref{convo-1} gives 
 \begin{eqnarray*}
 \rho_{2}(x) & = & \int_{-\infty}^{\infty} \rho_{1}(u) \rho_{1}(x-u) \, du \\
   & = & \frac{\pi^{2}}{4} \int_{-\infty}^{\infty} \frac{du}{\cosh^{2}(\pi u) \, \cosh^{2}(\pi(x-u))} \\
   & = & \frac{\pi}{4} \int_{-\infty}^{\infty} \frac{dt}{\cosh^{2}t \, \cosh^{2}(\pi x -t)}.
\end{eqnarray*}
\noindent
The change of variable  $w= e^{2t}$ gives 
\begin{equation}
\rho_{2}(x) = 2 \pi \int_{0}^{\infty} \frac{w \, dw}{(w+1)^{2} (\alpha + \beta w )^{2}},
\end{equation}
\noindent
with $\alpha = e^{\pi x}$ and $\beta = e^{- \pi x}$. The final integral is evaluated by partial 
fractions to produce the stated result.
 \end{proof}

The higher densities $\rho_{\ell}(x), \ell>2$, can now be computed via \eqref{recu-2} and the expressions in 
\eqref{density-s1} and \eqref{density-s2}.

\smallskip

The next step is to show that the integral appearing in Theorem \ref{coro-nice1} satisfies a 
differential-difference equation.

\begin{theorem}
The integral 
\begin{equation*}
z_{\ell}(b):=\int_{0}^{\infty}  \log(1+ bu^{2}) \, \rho_{\ell}(u) \, du
\end{equation*}
\noindent 
 satisfies  the differential-difference equation
\begin{align}\label{dde}
\ell(\ell+1)y_{\ell+2}(x+1)&=x(x-\ell)y_{\ell}''(x)+2(\ell+1)\left(x-\frac{\ell}{2}\right)y_{\ell}'(x)\nonumber\\
&\quad \quad \quad +\ell(\ell+1)y_{\ell}(x)+\frac{\ell^2}{4\left(x-\frac{\ell}{2}\right)^2}
\end{align}
for $b=(x-\ell/2)^{-2}$.
\end{theorem}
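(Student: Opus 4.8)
The plan is to derive \eqref{dde} from the differential-difference equation \eqref{recu-2} for the densities $\rho_\ell$ by integrating it against the weight $\log(1+bu^2)$ and recognizing the resulting integrals in terms of $y_\ell$ and its first two derivatives. Throughout put $m:=x-\tfrac{\ell}{2}$, so that $b=m^{-2}$ and, by definition, $y_\ell(x)=z_\ell(b)=\int_0^\infty\log(1+bu^2)\rho_\ell(u)\,du$. The one structural point that makes the equation close up is the index shift: for level $\ell+2$ at argument $x+1$ one has $(x+1)-\tfrac{\ell+2}{2}=x-\tfrac{\ell}{2}=m$, hence $y_{\ell+2}(x+1)=z_{\ell+2}(m^{-2})$, the \textit{same} $b$ as on the right-hand side of \eqref{dde}, which is exactly what integrating the left side of \eqref{recu-2} produces.

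First I would record the facts about $\rho_\ell$ that justify all the manipulations: it is smooth and even (so $\rho_\ell'(0)=0$), it is a probability density so $\int_0^\infty\rho_\ell=\tfrac12$, and it decays faster than any polynomial at infinity, as one reads off from \eqref{airault-1} or from \eqref{form-fn}. Writing $\log(1+bu^2)=\log(m^2+u^2)-\log(m^2)$ and differentiating under the integral sign gives
\[
y_\ell'(x)=\int_0^\infty\frac{2m}{m^2+u^2}\,\rho_\ell(u)\,du-\frac1m,
\qquad
y_\ell''(x)=\int_0^\infty\frac{2(u^2-m^2)}{(m^2+u^2)^2}\,\rho_\ell(u)\,du+\frac1{m^2}.
\]
Combining these with $\int_0^\infty\rho_\ell=\tfrac12$, and in the second case splitting $u^2+\tfrac{\ell^2}{4}=(u^2+m^2)+(\tfrac{\ell^2}{4}-m^2)$, I would put the two rational moments
\[
I_1:=\int_0^\infty\frac{u^2}{m^2+u^2}\,\rho_\ell(u)\,du,\qquad
I_2:=\int_0^\infty\frac{(m^2-u^2)(u^2+\ell^2/4)}{(m^2+u^2)^2}\,\rho_\ell(u)\,du
\]
in closed form: $I_1=-\tfrac{m}{2}\,y_\ell'(x)$ and $I_2=m\,y_\ell'(x)+\tfrac{\ell^2}{8m^2}+\big(\tfrac{m^2}{2}-\tfrac{\ell^2}{8}\big)y_\ell''(x)$.

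Next I would multiply \eqref{recu-2} (with the density variable renamed $u$) by $\log(1+bu^2)$ and integrate over $(0,\infty)$. The left side becomes $\ell(\ell+1)z_{\ell+2}(b)=\ell(\ell+1)y_{\ell+2}(x+1)$. On the right side I integrate by parts to move the $u$-derivatives off $\rho_\ell$; with $w:=\log(1+bu^2)$ and $p:=u^2+\tfrac{\ell^2}{4}$ the boundary terms vanish since $w$ and $w'$ vanish at $u=0$ and $\rho_\ell$ decays at infinity, leaving
\[
\ell(\ell+1)y_{\ell+2}(x+1)=\int_0^\infty (wp)''\,\rho_\ell\,du-2(\ell+2)\int_0^\infty (wu)'\,\rho_\ell\,du+(\ell+1)(\ell+2)\int_0^\infty w\,\rho_\ell\,du.
\]
Expanding $(wp)''=w''p+4uw'+2w$ and $(wu)'=w'u+w$, and using $b=m^{-2}$ to rewrite $w',w''$ over $m^2+u^2$, every term on the right collapses to a multiple of $I_1$, $I_2$, or $y_\ell(x)$; collecting the coefficients yields $\ell(\ell+1)y_{\ell+2}(x+1)=2I_2-4\ell\,I_1+\ell(\ell+1)y_\ell(x)$.

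Finally, substituting the closed forms of $I_1$ and $I_2$ and using $m^2-\tfrac{\ell^2}{4}=(x-\tfrac{\ell}{2})^2-\tfrac{\ell^2}{4}=x(x-\ell)$ together with $m=x-\tfrac{\ell}{2}$ turns this identity into \eqref{dde} verbatim, with the inhomogeneous term $\tfrac{\ell^2}{4(x-\ell/2)^2}$ coming out of $\tfrac{\ell^2}{8m^2}$ in $2I_2$. I expect the only genuine difficulty to be bookkeeping: carrying out the several integrations by parts and the ensuing algebra carefully, and in particular making sure the floor-function-free inhomogeneous term emerges correctly — it is precisely the piece of $2I_2$ left over after the Bernoulli-type cancellations encoded in $y_\ell''$ are accounted for.
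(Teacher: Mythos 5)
Your proposal is correct and follows essentially the same route as the paper: multiply the Pitman--Yor relation \eqref{recu-2} by $\log(1+bu^{2})$, integrate over $(0,\infty)$, move the derivatives off $\rho_{\ell}$ by parts (boundary terms vanishing by the decay of $\rho_{\ell}$), reduce the resulting rational moments to derivatives of the logarithmic moment, and invoke the index shift $z_{\ell+2}(b)=y_{\ell+2}(x+1)$. The only difference is bookkeeping: you differentiate in $x$ (through $m=x-\ell/2$) from the outset, whereas the paper works with $b$-derivatives of $z_{\ell}(b)$ and converts to $x$ at the end via the chain rule; your closed forms for $I_{1}$, $I_{2}$ and the collapsed identity $\ell(\ell+1)y_{\ell+2}(x+1)=2I_{2}-4\ell I_{1}+\ell(\ell+1)y_{\ell}(x)$ check out and yield \eqref{dde} exactly.
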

\begin{proof}
Let $b>0$. Start with \eqref{recu-2}, i.e.,
\begin{equation*}
\ell(\ell+1) \rho_{\ell+2}(u) = \left(u^{2}+\frac{\ell^{2}}{4}\right) \rho_{\ell}''(u) + 2u(\ell+2) \rho_{\ell}'(u) + (\ell+1)(\ell+2)\rho_{\ell}(u).
\end{equation*}
Multiply both sides by $\log(1+bu^2)$ and integrate both sides from $0$ to $\infty$ to obtain
\begin{align}\label{dde-fform}
\ell(\ell+1)z_{\ell+2}(b)&=\int_{0}^{\infty}  \left(u^2+\frac{\ell^2}{4}\right)\log(1+ bu^{2}) \, \rho_{\ell}''(u) \, du\nonumber\\
&\quad+2(\ell+2)\int_{0}^{\infty}  u\log(1+ bu^{2}) \, \rho_{\ell}'(u) \, du+(\ell+1)(\ell+2)z_{\ell}(b).
\end{align}
Let 
\begin{align}
I_{1}(b,\ell)&:=\int_{0}^{\infty}  u\log(1+ bu^{2}) \, \rho_{\ell}'(u) \, du,\nonumber\\
I_{2}(b,\ell)&:=\int_{0}^{\infty}  \left(u^2+\frac{\ell^2}{4}\right)\log(1+ bu^{2}) \, \rho_{\ell}''(u) \, du.
\end{align}
Consider $I_{1}(b,\ell)$ first. Integration by parts yields
\begin{align}\label{i1-ibp}
I_{1}(b,\ell)&=\left[u\log(1+ bu^{2})\rho_{\ell}(u)\right]_{0}^{\infty}-\int_{0}^{\infty}\left(\frac{2bu^2}{1+bu^2}+\log(1+bu^2)\right)\rho_{\ell}(u)\, du.
\end{align}
Note that $\rho_{\ell}(t)\to 0$ as $t\to \infty$. This is easily seen for $\rho_1$ since
\begin{equation}\label{rho1-infty}
\rho_1(t)=\frac{\pi}{2}\text{sech}^{2}(\pi t)=\frac{2\pi e^{-2\pi t}}{(1+e^{-2\pi t})^2}\to 0 \hspace{2mm}\text{as}\hspace{2mm} t\to\infty.
\end{equation}
For $\ell\geq 2$, use the definition of $\rho_{\ell}(t)$ in \eqref{convo-1}, and the above asymptotic for $\rho_1$, along 
with Lebesgue's dominated convergence theorem to deduce that $\rho_{\ell}(t)\to 0$ as $t\to \infty$. As $t\to 0$, it is easy
 to see that the densities $\rho_{\ell}(t)$ are finite. 

This implies that the boundary terms in \eqref{i1-ibp} vanish so that
\begin{align}\label{i1-final}
I_{1}(b,\ell)&=-z_{\ell}(b)-2b\int_{0}^{\infty}\rho_{\ell}(u)\frac{d}{db}\log(1+bu^2)\, du\nonumber\\
&=-z_{\ell}(b)-2b\frac{d}{db}z_{\ell}(b),
\end{align}
where differentiation (with respect to $b$) under the integral sign was employed in the last step.

Now consider $I_{2}(b,\ell)$, use integration by parts twice, and note that the boundary terms again vanish, 
thereby giving
\begin{align}
I_{2}(b,\ell)&=\int_{0}^{\infty}\left\{\frac{b(\ell^2+(20-b\ell^2)u^2+12bu^4)}{2(1+bu^2)^2}+2\log(1+bu^2)\right\}\rho_{\ell}(u)\, du\nonumber\\
&=2z_{\ell}(b)+\frac{b}{2}\int_{0}^{\infty}\frac{(\ell^2+(20-b\ell^2)u^2+12bu^4)}{(1+bu^2)^2}\rho_{\ell}(u)\, du.
\end{align}
Next, use the following representation
\begin{equation}
\frac{(\ell^2+(20-b\ell^2)u^2+12bu^4)}{(1+bu^2)^2}=\frac{\ell^2}{(1+bu^2)^2}+\frac{(8-b\ell^2)u^2}{(1+bu^2)^2}+\frac{12u^2}{1+bu^2}
\end{equation}
to rewrite the above expression for $I_{2}(b,\ell)$ in the form
\begin{align}\label{i2-rewritten}
I_{2}(b,\ell)&=2z_{\ell}(b)+\frac{b}{2}\bigg\{\ell^2\int_{0}^{\infty}\frac{\rho_{\ell}(u)}{(1+bu^2)^2}\, du+(8-b\ell^2)\int_{0}^{\infty}\frac{u^2\rho_{\ell}(u)}{(1+bu^2)^2}\, du\nonumber\\
&\quad\quad\quad\quad\quad\quad+12\int_{0}^{\infty}\frac{u^2\rho_{\ell}(u)}{1+bu^2}\, du\bigg\}.
\end{align}
As shown before,
\begin{equation}\label{i2-ibp-1}
\int_{0}^{\infty}\frac{u^2\rho_{\ell}(u)}{1+bu^2}\, du=\frac{d}{db}z_{\ell}(b).
\end{equation}
Since 
\begin{equation*}
\frac{\rho_{\ell}(u)}{1+bu^2}=\rho_{\ell}(u)-b\frac{u^2\rho_{\ell}(u)}{1+bu^2},
\end{equation*}
and $\rho_{\ell}$, being a probability density, satisfies $\int_{-\infty}^{\infty}\rho_{\ell}(u)=1$, it is seen that
\begin{align}
\int_{0}^{\infty}\frac{\rho_{\ell}(u)}{1+bu^2}\, du=\frac{1}{2}-b\int_{0}^{\infty}\frac{u^2\rho_{\ell}(u)}{1+bu^2}\, du=\frac{1}{2}-b\frac{d}{db}z_{\ell}(b).
\end{align}
Differentiation (with respect to $b$) under the integral sign then gives
\begin{align}\label{i2-ibp-2}
\int_{0}^{\infty}\frac{u^2\rho_{\ell}(u)}{(1+bu^2)^2}\, du=b\frac{d^2}{db^2}z_{\ell}(b)+\frac{d}{db}z_{\ell}(b).
\end{align}
Similarly it can be shown that
\begin{align}\label{i2-ibp-3}
\int_{0}^{\infty}\frac{\rho_{\ell}(u)}{(1+bu^2)^2}\, du=\frac{1}{2}-2b\frac{d}{db}z_{\ell}(b)-b^2\frac{d^2}{db^2}z_{\ell}(b).
\end{align}
Now substitute \eqref{i2-ibp-1}, \eqref{i2-ibp-2} and \eqref{i2-ibp-3} in \eqref{i2-rewritten} and simplify to obtain
\begin{align}\label{i2-final}
I_{2}(b,\ell)&=b^2(4-b\ell^2)\frac{d^2}{db^2}z_{\ell}(b)+b\left(10-\frac{3b\ell^2}{2}\right)\frac{d}{db}z_{\ell}(b)+2z_{\ell}(b)+\frac{b\ell^2}{4}.
\end{align}
Then substitute \eqref{i1-final} and \eqref{i2-final} in \eqref{dde-fform} to deduce that
\begin{align}\label{dde-inter}
\ell(\ell+2)z_{\ell+2}(b)&=b^2(4-b\ell^2)\frac{d^2}{db^2}z_{\ell}(b)+2b\left(1-2\ell-\frac{3b\ell^2}{4}\right)\frac{d}{db}z_{\ell}(b)\nonumber\\
&\quad+\ell(\ell+1)z_{\ell}(b)+\frac{b\ell^2}{4}.
\end{align}
Now let $b=(x-\ell/2)^{-2}$ as in Theorem \ref{coro-nice1}, so that defining 
\begin{equation*}
y_{\ell}(x):=z_{\ell}(b),
\end{equation*}
and replacing $\ell$ by $\ell+2$, gives $z_{\ell+2}(\left(x-1-\ell/2\right)^{-2})=y_{\ell+2}(x)$, and hence
\begin{equation}\label{b-shift}
z_{\ell+2}(b)=y_{\ell+2}(x+1).
\end{equation}
A direct computation now gives 
\begin{align}\label{der-y}
\frac{d}{db}z_{\ell}(b)&=-\frac{1}{2}\left(x-\frac{\ell}{2}\right)^{3}y_{\ell}'(x),\nonumber\\
\frac{d^2}{db^2}z_{\ell}(b)&=\frac{1}{4}\left(x-\frac{\ell}{2}\right)^{6}y_{\ell}''(x)+\frac{3}{4}\left(x-\frac{\ell}{2}\right)^{5}y_{\ell}'(x),
\end{align}
where the prime denotes differentiation with respect to $x$. Finally, substitute \eqref{b-shift} and \eqref{der-y} in
 \eqref{dde-inter} to arrive at \eqref{dde}.
\end{proof}

\noindent
\textbf{Remark:} The case $\ell=1$  of Theorem \ref{coro-nice1} was derived in \cite[Equation (2.28)]{dixit-2014a} and an 
elementary proof of the case $\ell=2$ 
is given in the next section. The differential-difference equation \eqref{dde} then produces the  values of 
\begin{equation*}
 \int_{0}^{\infty}  \log(1+ bu^{2}) \, \rho_{\ell}(u) \, du
\end{equation*}
for any $\ell>2$.

\section{The special case $\ell=2$ of the generating function of the N\"{o}rlund polynomials}
\label{sec-gf-nor00}
 
In this section, we present a different proof of Theorem \ref{gen-fun-modber-0} for $\ell = 2$ which was, in fact, 
the genesis of this project. It involves brute force verification of Theorem \ref{coro-nice1} when $\ell=2$. It is then 
used along with the result in \eqref{eval-int}, namely,
\begin{align}\label{eval2}
\text{eval}\{ \log \mathfrak{B}^{( 2 )}(x) \}=\log \left| x - 1 \right| + \int_{0}^{\infty} \log(1 + bu^{2}) \rho_{2}(u) \, du
\end{align}
 with $b=(x-1)^{-2}$, and the special case of \eqref{eval-gen}, namely,
\begin{equation}\label{bns2}
\sum_{n=1}^{\infty} B_{n}^{(2)*} z^{n}=- \frac{1}{2} \log z - \frac{1}{2}  \text{eval} \left\{ \log  \mathfrak{B}^{(2)} \left( z + \frac{1}{z} \right) \right\}.
\end{equation}
The point to illustrate here is that these calculations soon become out of reach 
for large values of $\ell$. In fact, the case $\ell=3$ itself required six different integrals to be evaluated
in order to arrive at Theorem \ref{coro-nice1} through the direct computation of the integral. At the end of the previous section, another way of calculating these
integrals for all $\ell$ through a differential-difference equation was given. However, this being a recursive way, not only does
it not give an explicit formula but also for higher values of $\ell$, evaluating the integrals this way is a cumbersome process. 
These shortcomings are what led us to seek a new representation for $\text{eval}\{ \log \mathfrak{B}^{( \ell )}(x) \}$, namely
 Theorem \ref{eval-mess00}, which gives an explicit formula for these integrals, avoiding messy calculations at the same time.


\begin{proposition}
Let $a \neq 0$ and 
\begin{equation}\label{integral}
I(a) =  \int_{0}^{\infty} \frac{(x \coth x -1) \log( 1 + a^{2}x^{2})}{\sinh^{2}x} \, dx.
\end{equation}
\noindent
Then 
\begin{equation}
I(a) =  -\log c - 1 +  \psi(c) +  c \, \psi'(c),
\end{equation}
\noindent
with $\begin{displaystyle}c = \frac{1}{\pi a} \end{displaystyle}$.
\end{proposition}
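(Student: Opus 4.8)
The plan is to compute $I(a)$ directly, reducing it by two integrations by parts to a Binet‑type integral that evaluates in terms of $\psi$ and $\psi'$. Since the integrand depends on $a$ only through $a^{2}$ we may assume $a>0$, and we set $c=1/(\pi a)>0$. The first step is the elementary identity
\[
\frac{x\coth x-1}{\sinh^{2}x}=\frac{x\cosh x-\sinh x}{\sinh^{3}x}=\tfrac12\,h''(x),\qquad h(x):=x\coth x-1-x=\frac{2x}{e^{2x}-1}-1,
\]
the point of subtracting the linear term $x$ being that $h$ and $h'$ decay exponentially as $x\to\infty$ and stay bounded as $x\to0$, whereas $x\coth x-1$ itself grows linearly.

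Substituting this into \eqref{integral} and integrating by parts twice in $x$ — each time checking, via the asymptotics of $h$, $h'$ and of $\log(1+a^{2}x^{2})$, that the boundary contributions at $0$ and $\infty$ vanish — yields
\[
I(a)=\int_{0}^{\infty}\frac{a^{2}(1-a^{2}x^{2})}{(1+a^{2}x^{2})^{2}}\Big(\frac{2x}{e^{2x}-1}-1\Big)\,dx .
\]
The contribution of the ``$-1$'' integrates to $\big[a^{2}x/(1+a^{2}x^{2})\big]_{0}^{\infty}=0$, so only the $2x/(e^{2x}-1)$ part survives; rescaling $x=\pi t$ and using $a^{2}\pi^{2}=c^{-2}$ then gives
\[
I(a)=2\int_{0}^{\infty}\frac{t\,(c^{2}-t^{2})}{(c^{2}+t^{2})^{2}}\,\frac{dt}{e^{2\pi t}-1}.
\]

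The last step is to split $\dfrac{t(c^{2}-t^{2})}{(c^{2}+t^{2})^{2}}=\dfrac{2tc^{2}}{(c^{2}+t^{2})^{2}}-\dfrac{t}{c^{2}+t^{2}}$ and invoke Binet's second formula $\psi(z)=\log z-\tfrac1{2z}-2\int_{0}^{\infty}\tfrac{t\,dt}{(t^{2}+z^{2})(e^{2\pi t}-1)}$ together with its first derivative in $z$, which gives $4z^{2}\int_{0}^{\infty}\tfrac{t\,dt}{(t^{2}+z^{2})^{2}(e^{2\pi t}-1)}=z\psi'(z)-1-\tfrac1{2z}$. Adding the two pieces, the $\tfrac1{2c}$ terms cancel and one is left with $I(a)=-\log c-1+\psi(c)+c\psi'(c)$, which is the assertion.

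The main obstacle is the bookkeeping in the double integration by parts: one must resist integrating by parts against $x\coth x-1$ directly, since that produces a divergent boundary term $\log(1+a^{2}x^{2})\cdot(x\coth x-1)'\to\infty$, and instead work with $h(x)=x\coth x-1-x$, after which one still has to verify that each of the four boundary terms genuinely vanishes. Everything past that point is a mechanical reduction to a tabulated (Binet) integral; one could alternatively expand $1/(e^{2x}-1)=\sum_{k\ge1}e^{-2kx}$ and integrate term by term, but that route runs through sums of sine and cosine integral functions and is considerably messier. Note finally that this Proposition is precisely the $\ell=2$ instance of Theorem~\ref{coro-nice1} (take $b=c^{-2}$, $x=c+1$, and use the explicit $\rho_{2}$ from \eqref{density-s2}); the self‑contained computation above is what makes Section~\ref{sec-gf-nor00} independent of the umbral machinery.
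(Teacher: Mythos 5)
Your proof is correct and follows essentially the same route as the paper's: the same regularization (working with $\coth x-1-\tfrac{x}{\sinh^{2}x}$, i.e.\ your $h'$, so that boundary terms vanish), integration by parts, and reduction to Binet's second formula for $\psi$ and its $z$-derivative, which are exactly the Gradshteyn--Ryzhik entries $3.415.1$ and $3.415.2$ invoked in the paper. The only difference is organizational: you carry out both integrations by parts up front and then split the single rational kernel $\tfrac{t(c^{2}-t^{2})}{(c^{2}+t^{2})^{2}}$, whereas the paper splits into $I_{1}$ and $I_{2}$ after one integration by parts and integrates $I_{2}$ by parts separately.
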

\begin{proof}
To evaluate this integral, observe first that 
\begin{equation}
\frac{d}{dx} \left( \coth x - \frac{x}{\sinh^{2}x} \right) = 
\frac{2(x \coth x - 1)}{\sinh^{2} x} 
\end{equation}
\noindent
and write 
\begin{equation}
I(a) = \frac{1}{2} \int_{0}^{\infty} \log( 1 + a^{2}x^{2}) 
\frac{d}{dx} \left( \coth x - \frac{x}{\sinh^{2}x} \right) \, dx.
\end{equation}
\noindent
In order to integrate by parts and guarantee the convergence of the boundary terms, write 
the integral as 
\begin{equation}
I(a) = \frac{1}{2} \int_{0}^{\infty} \log( 1 + a^{2}x^{2}) 
\frac{d}{dx} \left( \coth x - 1 - \frac{x}{\sinh^{2}x} \right) \, dx.
\end{equation}
\noindent
Integrate by parts and verify that the boundary terms vanish to produce 
\begin{equation}
I(a) = a^{2} \left( I_{1}(a)  + I_{2}(a) \right)
\label{form-ia}
\end{equation}
\noindent
with 
\begin{equation}
I_{1}(a) = \int_{0}^{\infty} \frac{x (1  - \coth x) }{1+a^{2}x^{2}} \, dx \text{ and }
I_{2}(a) =  \int_{0}^{\infty} \frac{x^{2} \, dx}{(1+a^{2}x^{2}) \sinh^{2}x}.
\end{equation}
\noindent
The evaluation of $I_{1}(a)$ is described first. Write it as 
\begin{eqnarray*}
I_{1}(a) & = & - \int_{0}^{\infty} e^{-x} \frac{x}{\sinh x} \, \frac{dx}{1+a^{2}x^{2}} \\
& = & - 2 \int_{0}^{\infty} \frac{x \, dx}{(1+a^{2}x^{2})(e^{2x}-1)} \\
& = & \frac{1}{a^{2}} \left[ \psi \left( \frac{1}{\pi a} \right) + \frac{\pi a}{2}+ \log( \pi a) 
\right]
\end{eqnarray*}
\noindent
using Entry $3.415.1$ of \cite{gradshteyn-2015a}:
\begin{equation*}
\int_{0}^{\infty} \frac{x \, dx}{(x^{2}+ \beta^{2}) ( e^{\mu x} - 1) } = 
\frac{1}{2} \left[ \log \left( \frac{\beta \mu}{2 \pi} \right) - 
\frac{\pi}{\beta \mu} - \psi \left( \frac{\beta \mu}{2 \pi} \right) 
\right], \quad \realpart{ \beta} > 0, \,\, \realpart{\mu} > 0,
\end{equation*}
\noindent
where $\psi(z) = \frac{d}{dz} \log \Gamma(z)$ is the digamma function. A direct 
proof of this entry and some generalizations appear in \cite{boros-2003a}.

To evaluate $I_{2}(a)$ write it as 
\begin{eqnarray}
I_{2}(a) & = & 4 \int_{0}^{\infty} \frac{x^{2} e^{2x} \, dx}{(1+a^{2}x^{2})(e^{2x}-1)^{2}} \\
& = & - 2 \int_{0}^{\infty} \frac{x^{2}}{1+a^{2}x^{2}} \frac{d}{dx} \left( \frac{1}{e^{2x}-1} \right) \, dx.
\nonumber
\end{eqnarray}
\noindent
Integration by parts and a simple scaling produces 
\begin{equation}
I_{2}(a) = \frac{4}{a^{4} \pi^{2}} \int_{0}^{\infty} \frac{x \, dx}{(x^{2}+c^{2})^{2} (e^{2 \pi x}-1)}
\end{equation}
\noindent
with $c = 1/(\pi a)$. Entry $3.415.2$ in \cite{gradshteyn-2015a}, established in \cite{boros-2003a}, 
states that 
\begin{equation}
\int_{0}^{\infty} \frac{x \, dx}{(x^{2} + \beta^{2})^{2} (e^{2 \pi x} - 1)} = 
- \frac{1}{8 \beta^{3}} - \frac{1}{4 \beta^{2}} + \frac{1}{4 \beta} \psi'(\beta),
\end{equation}
which gives 
\begin{equation}
I_{2}(a) = - \frac{\pi}{2a} - \frac{1}{a^{2}} + \frac{1}{\pi a^{3}} \psi' \left( \frac{1}{\pi a} \right).
\end{equation}
\noindent
Replacing the values of $I_{1}(a)$ and $I_{2}(a)$ in \eqref{form-ia} gives the result.
\end{proof}

We now obtain Theorem \ref{gen-fun-modber-0} for $\ell=2$ using the above proposition. To that end,
 let $ x=\pi u$ and $a^2=b/\pi^2$ in \eqref{integral} and use Lemma \ref{density-s2} to find that
\begin{equation*}
\int_{0}^{\infty}\rho_2(u)\log(1+bu^2)\, du=\psi\left(\frac{1}{\sqrt{b}}\right)+\frac{1}{\sqrt{b}}\psi'\left(\frac{1}{\sqrt{b}}\right)+\frac{1}{2}\log b-1.
\end{equation*}
The above equation, along with \eqref{eval2} and the fact that $b=(x-1)^{-2}$, yields
\begin{equation*}
{\rm{eval}} \left\{ \log \mathfrak{B}^{(2)}(x) \right\} = 
\psi \left(  \left| x - 1\right| \right)+\left| x - 1\right|\psi' \left(  \left| x - 1\right| \right)-1.
\end{equation*}
Hence for $x\geq 1$, we have
\begin{equation}
{\rm{eval}} \left\{ \log \mathfrak{B}^{(2)}(x) \right\} = 
\psi \left(   x - 1 \right)+ (x - 1)\psi' \left(  x - 1 \right)-1.
\end{equation}
Substitute this in \eqref{bns2} to obtain
\begin{equation}
\sum_{n=1}^{\infty}{B_{n}^{(2)}}^{*}z^n=
-\frac{1}{2}\log z-\frac{1}{2}\left\{\psi\left(z+\frac{1}{z}-1\right)+\left(z+\frac{1}{z}-1\right)\psi'\left(z+\frac{1}{z}-1\right)-1\right\}.
\end{equation}
This completes the proof.

\section{Integrals involving Chebyshev polynomials}
\label{sec-chebyshev}

This section presents the evaluation of some integrals involving the Chebyshev polynomials 
obtained as byproducts of the former results. The proof uses the Binet formulas \eqref{cheby-t-binet} and \eqref{cheby-u-binet} 
for these polynomials. The discussion begins with some preliminary results.

\begin{lemma}
\label{lemma-61}
Let $b>0$. Then 
\begin{equation}
\frac{d^{2\ell}}{du^{2\ell}} \log(1+ bu^{2}) = \frac{2 (-1)^{\ell-1} b^{\ell} (2\ell-1)!}{(1+bu^{2})^{\ell}} 
T_{2\ell} \left( \frac{1}{\sqrt{1+bu^{2}}}\right)
\end{equation}
\noindent
and 
\begin{equation}
\frac{d^{2\ell+1}}{du^{2\ell+1}} \log(1+ bu^{2}) = \frac{2 (-1)^{\ell} b^{\ell+1} (2\ell)!u}{(1+bu^{2})^{\ell+1}} 
U_{2\ell} \left( \frac{1}{\sqrt{1+bu^{2}}} \right).
\end{equation}
\end{lemma}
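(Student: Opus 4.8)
The plan is to prove both formulas simultaneously by setting $v = 1/\sqrt{1+bu^2}$, so that $\log(1+bu^2) = -2\log v$, and tracking how repeated differentiation in $u$ interacts with the Binet representations \eqref{cheby-t-binet} and \eqref{cheby-u-binet}. First I would record the base derivatives: $\frac{d}{du}\log(1+bu^2) = \frac{2bu}{1+bu^2}$ and then, writing $w = 1+bu^2$, observe that each further $\frac{d}{du}$ lowers a power of $w$ and either introduces or removes a factor of $u$, while $\frac{dw}{du} = 2bu$. The natural way to organize this is an induction on $\ell$ that proves the even-order and odd-order statements in tandem: differentiating the claimed expression for $\frac{d^{2\ell}}{du^{2\ell}}\log(1+bu^2)$ should produce the claimed expression for $\frac{d^{2\ell+1}}{du^{2\ell+1}}\log(1+bu^2)$, and differentiating that should return the even case with $\ell \mapsto \ell+1$.

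The key algebraic input will be the derivative identities for Chebyshev polynomials together with their values at the relevant argument. Concretely, with $x = 1/\sqrt{1+bu^2}$ one has $\sqrt{x^2-1} = \sqrt{-bu^2/(1+bu^2)} = iu\sqrt{b}/\sqrt{1+bu^2}$, so the Binet formulas give
\begin{equation*}
T_{2\ell}(x) = \frac{1}{2}\left[(x+iu\sqrt{b}x)^{2\ell} + (x - iu\sqrt{b}x)^{2\ell}\right] = \frac{x^{2\ell}}{2}\left[(1+iu\sqrt{b})^{2\ell} + (1-iu\sqrt{b})^{2\ell}\right],
\end{equation*}
and similarly $U_{2\ell}(x) \cdot 2\sqrt{x^2-1} = x^{2\ell}\left[(1+iu\sqrt{b})^{2\ell} - (1-iu\sqrt{b})^{2\ell}\right]$. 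Since $x^{2\ell} = (1+bu^2)^{-\ell}$, this means the right-hand sides of the two claimed formulas are, up to the explicit constants and powers of $b$, exactly $\pm\left[(1+iu\sqrt b)^{-1}\right]$-type combinations of $(1\pm iu\sqrt b)^{2\ell}/(1+bu^2)^{2\ell}$; more to the point, they reduce to $\frac{d^k}{du^k}$ of $\log(1+bu^2) = \log(1+iu\sqrt b) + \log(1-iu\sqrt b)$. So an efficient route is: compute $\frac{d^k}{du^k}\log(1\pm iu\sqrt b) = \frac{(-1)^{k-1}(k-1)!(\pm i\sqrt b)^k}{(1\pm iu\sqrt b)^k}$ directly, add the two, and then recognize the resulting symmetric/antisymmetric combination of $(1\mp iu\sqrt b)^k$ over $(1+bu^2)^k$ as precisely $2(1+bu^2)^{-\ell}T_{2\ell}$ in the even case $k = 2\ell$ and as $2u\sqrt b(1+bu^2)^{-\ell-1}\cdot\sqrt{?}\,U_{2\ell}$ in the odd case, matching powers of $i$, $b$, and the sign $(-1)^{\ell-1}$ or $(-1)^\ell$ against the factorials $(2\ell-1)!$ and $(2\ell)!$.

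The main obstacle I anticipate is purely bookkeeping: correctly matching the powers of $i$, the signs $(-1)^{\ell-1}$ versus $(-1)^\ell$, the powers $b^\ell$ versus $b^{\ell+1}$, and the factorial $(k-1)!$ against $(2\ell-1)!$ or $(2\ell)!$, since the odd case carries an extra factor of $u$ coming from $\sqrt{x^2-1}$ in the $U_n$ Binet formula. A secondary subtlety is ensuring the branch of $\sqrt{x^2-1}$ is handled consistently — but because $T_n$ and $U_n$ are polynomials, the Binet formulas are identities regardless of branch, so I can freely write $\sqrt{x^2-1} = iu\sqrt b\,x$ and not worry about sign ambiguity. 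Once the constants are checked at, say, $\ell = 0$ and $\ell = 1$ (where $T_0 = 1$, $T_2(x) = 2x^2-1$, $U_0 = 1$, $U_2(x) = 4x^2 - 1$), the general pattern follows either from the direct computation above or, equivalently, from the two-step induction; I would present the direct $\log(1\pm iu\sqrt b)$ computation as the cleanest writeup.
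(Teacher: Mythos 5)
Your proposal is correct and is essentially the paper's own proof: write $\log(1+bu^{2})=\log(1+i\sqrt{b}\,u)+\log(1-i\sqrt{b}\,u)$, differentiate each term explicitly to get $\frac{(-1)^{k-1}(k-1)!\,(\pm i\sqrt{b})^{k}}{(1\pm i\sqrt{b}\,u)^{k}}$, and reassemble the symmetric and antisymmetric combinations via the Binet forms at $x=1/\sqrt{1+bu^{2}}$ with $\sqrt{x^{2}-1}=i\sqrt{b}\,u\,x$. One bookkeeping point that your planned checks at $\ell=0,1$ (where $U_{0}=1$, $U_{2}(x)=4x^{2}-1$) will force on you: in the odd case the combination that actually appears involves $(1\pm i\sqrt{b}\,u)^{2\ell+1}$, so the identity you need is $(x+\sqrt{x^{2}-1})^{2\ell+1}-(x-\sqrt{x^{2}-1})^{2\ell+1}=2\sqrt{x^{2}-1}\,U_{2\ell}(x)$ for the standard degree-$2\ell$ Chebyshev $U$ (equivalently $\sin((2\ell+1)\theta)/\sin\theta$), rather than the exponent-$2\ell$ relation you wrote down.
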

\begin{proof}
The proof is given for the second formula. The first one can be established by the same 
procedure. Successive differentiation gives 
\begin{equation}
\frac{d^{2\ell+1}}{du^{2\ell+1}} \log( 1 \pm i \sqrt{b}u) = \pm \frac{(-1)^{\ell} i b^{\ell+ 1/2} (2\ell)!}
{(1 \pm i \sqrt{b}u)^{2\ell+1}}.
\end{equation}
\noindent
Hence 
\begin{eqnarray*}
\frac{d^{2\ell+1}}{du^{2\ell+1}} \log(1 + bu^{2})  & = & 
(-1)^{\ell} i b^{\ell+ 1/2} (2\ell)! 
\left\{ \frac{1}{(1 + i \sqrt{b} u)^{2\ell+1}} - 
 \frac{1}{(1 - i \sqrt{b} u)^{2\ell+1}}  \right) \\
 & = & \frac{2 (-1)^{\ell} b^{\ell+1} (2\ell)! u}{(1+bu^{2})^{\ell+1}} 
 U_{2\ell} \left( \frac{1}{\sqrt{1+ bu^{2}}} \right)
 \end{eqnarray*}
 \noindent
 using \eqref{cheby-u-binet}.
\end{proof}

The representation for the densities $\rho_{\ell}(u)$ given by Airault are now used to 
produce some spectacular integrals involving the Chebyshev polynomials.

\begin{theorem}
Let $T_{\ell}(x)$ be the Chebyshev polynomial of the first kind. Define 
\begin{equation}
P_{1}(u,\ell) = \prod_{j=1}^{\ell-1} (u^{2} + j^{2}) \text{ and }
P_{2}(u,\ell) = \prod_{j=1}^{\ell}\left(u^{2} + \left( j - \tfrac{1}{2} \right)^{2} \right)
\end{equation}
\noindent
Then, for $x > \ell$,
\begin{multline*}
\int_{0}^{\infty} \left\{ \frac{u P_{1}(u,\ell)}{\tanh( \pi u)} - u^{2\ell-1} \right\} 
T_{2\ell} \left( \frac{x-\ell}{\sqrt{u^{2} + (x-\ell)^{2}}} \right) 
\frac{du}{(u^{2}+ (x-\ell)^{2})^{\ell}} = \\
(-1)^{\ell} \left( \log(x-\ell) + H_{2\ell-1} - 
\frac{d^{2\ell-1}}{dx^{2\ell-1}} \left\{ \binom{x-1}{2\ell-1} \psi(x-\ell) \right\} \right),
\end{multline*}
\noindent
and for $x > \ell+\tfrac{1}{2}$,
\begin{multline*}
\int_{0}^{\infty} \left\{ \tanh(\pi u)  P_{2}(u,\ell) - u^{2\ell} \right\} 
U_{2\ell} \left( \frac{x-\ell- \tfrac{1}{2}}{\sqrt{u^{2} + (x-\ell - \tfrac{1}{2} )^2 }} \right) 
\frac{u \, du}{(u^{2}+ (x-\ell - \tfrac{1}{2})^{2})^{\ell+1}} = \\
(-1)^{\ell} \left( \log(x-\ell - \tfrac{1}{2}) + H_{2\ell} - 
\frac{d^{2\ell}}{dx^{2\ell}} \left\{ \binom{x-1}{2\ell} \psi(x-\ell- \tfrac{1}{2}) \right\} \right).
\end{multline*}
\end{theorem}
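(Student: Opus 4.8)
The plan is to feed Airault's closed forms \eqref{airault-1} for $\rho_{2\ell}$ and $\rho_{2\ell+1}$ into Theorem \ref{coro-nice1}, to shift all the derivatives off the density and onto $\log(1+bu^{2})$ by repeated integration by parts, and then to recognize the resulting derivative via Lemma \ref{lemma-61}. For the first identity I would apply Theorem \ref{coro-nice1} with $2\ell$ in place of $\ell$, so that $b=(x-\ell)^{-2}$ and $x>\ell$. Since $Q_{2\ell-1}(\pi u)=\pi^{2\ell-1}uP_{1}(u,\ell)$, Airault's formula becomes $\rho_{2\ell}(u)=\frac{1}{2(2\ell-1)!}\,\frac{d^{2\ell}}{du^{2\ell}}\bigl[\frac{uP_{1}(u,\ell)}{\tanh\pi u}\bigr]$; as $uP_{1}(u,\ell)$ is a polynomial of degree $2\ell-1$, one may replace the bracket by $G(u):=\frac{uP_{1}(u,\ell)}{\tanh\pi u}-u^{2\ell-1}$ without altering the $(2\ell)$-th derivative. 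For the second identity I would apply Theorem \ref{coro-nice1} with $2\ell+1$ in place of $\ell$, so that $b=(x-\ell-\tfrac12)^{-2}$ and $x>\ell+\tfrac12$; here $Q_{2\ell}(\pi u)=\pi^{2\ell}P_{2}(u,\ell)$ gives $\rho_{2\ell+1}(u)=\frac{1}{2(2\ell)!}\,\frac{d^{2\ell+1}}{du^{2\ell+1}}\bigl[P_{2}(u,\ell)\tanh\pi u\bigr]$, and I would use $G(u):=P_{2}(u,\ell)\tanh\pi u-u^{2\ell}$.

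Next I would integrate $\int_{0}^{\infty}\log(1+bu^{2})\,G^{(k)}(u)\,du$ by parts $k$ times, with $k=2\ell$ in the first case and $k=2\ell+1$ in the second. Subtracting the monomial is both harmless (it does not change $G^{(k)}$) and essential: it makes $G(u)$ grow only like a polynomial of degree at most $2\ell-3$ (resp. $2\ell-2$), while Lemma \ref{lemma-61} shows the $k$-th derivative of $\log(1+bu^{2})$ decays like $u^{-2\ell}$ (resp. $u^{-2\ell-1}$), so their product is $O(u^{-3})$ and the final integral converges. A degree count disposes of every boundary term at $u=\infty$. At $u=0$ one uses that $\log(1+bu^{2})$ is even and vanishes at $0$, so only its even-order derivatives of order $\ge 2$ can occur in a boundary term, paired there with an odd-order (resp. even-order) derivative of $G$; splitting $G$ into even and odd parts, the non-monomial piece $\frac{uP_{1}(u,\ell)}{\tanh\pi u}$ is even (resp. $P_{2}(u,\ell)\tanh\pi u$ is odd), hence contributes nothing to such a term, while the monomial piece $u^{2\ell-1}$ (resp. $u^{2\ell}$) contributes only through its top derivative, which sits at a boundary index already excluded. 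Thus all boundary terms vanish and $\int_{0}^{\infty}\log(1+bu^{2})\,G^{(k)}(u)\,du=(-1)^{k}\int_{0}^{\infty}G(u)\,\frac{d^{k}}{du^{k}}\log(1+bu^{2})\,du$.

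Finally I would put the pieces together. By Lemma \ref{lemma-61} and $b=(x-\ell)^{-2}$ one has $\frac{d^{2\ell}}{du^{2\ell}}\log(1+bu^{2})=\frac{2(-1)^{\ell-1}(2\ell-1)!}{((x-\ell)^{2}+u^{2})^{\ell}}\,T_{2\ell}\bigl(\frac{x-\ell}{\sqrt{(x-\ell)^{2}+u^{2}}}\bigr)$, and with $x-\ell-\tfrac12$ replacing $x-\ell$ the analogous formula for the $(2\ell+1)$-th derivative, now carrying $U_{2\ell}$ and an extra factor $u$. Substituting this into Theorem \ref{coro-nice1}, cancelling the factorials, and using the explicit form of $G$ shows that the left-hand side of Theorem \ref{coro-nice1} equals $(-1)^{\ell-1}$ times precisely the integral appearing on the left of the claimed identity; comparing with the right-hand side of Theorem \ref{coro-nice1} (with $\ell$ replaced by $2\ell$, resp. $2\ell+1$) and moving the sign $(-1)^{\ell-1}=(-1)^{\ell+1}$ across yields the factor $(-1)^{\ell}$ in front of that right-hand side, which is the asserted expression. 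The main obstacle here is the bookkeeping of the iterated integration by parts: one must verify that subtracting exactly $u^{2\ell-1}$ (resp. $u^{2\ell}$) is legitimate and, for convergence at infinity, necessary, and that no boundary term survives at either endpoint.
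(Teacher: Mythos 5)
Your proposal is correct and follows essentially the same route as the paper's own proof: Airault's closed forms for $\rho_{2\ell}$ and $\rho_{2\ell+1}$, subtraction of the leading monomial $u^{2\ell-1}$ (resp.\ $u^{2\ell}$) so that the iterated integration by parts converges with vanishing boundary terms, identification of the resulting derivative of $\log(1+bu^{2})$ via Lemma \ref{lemma-61}, and comparison with Theorem \ref{coro-nice1}; your boundary analysis at $0$ and $\infty$ is in fact more detailed than the paper's. The only caveat is that this argument (like the paper's) yields $\psi(x-\ell)$ in the second identity, since $\lfloor(2\ell+1)/2\rfloor=\ell$ in Theorem \ref{coro-nice1}, rather than the $\psi\left(x-\ell-\tfrac{1}{2}\right)$ printed in the statement, so the match with ``the asserted expression'' holds only up to that apparent misprint.
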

\begin{proof}
The details are given for the second formula.  The expression for the density 
functions in given by Airault in \eqref{airault-1} are written as 
\begin{equation}
\rho_{2\ell+1}(u) = C_{\ell} \left( \frac{d}{du} \right)^{2\ell+1} \left[P_{2}(u,\ell) \tanh( \pi u) \right]
\end{equation}
\noindent 
with $C_{\ell} = (2 (2\ell)!)^{-1}$. Therefore 
\begin{multline}
\int_{0}^{\infty} \rho_{2\ell+1}(u) \log(1+bu^{2}) \, du   =  C_{\ell} 
\int_{0}^{\infty} \log(1+bu^{2}) \left( \frac{d}{du} \right)^{2\ell+1} 
\left[ P_{2}(u,\ell) \tanh(\pi u ) \right] \, du   \label{int-parts} \\
  =  C_{\ell} 
\int_{0}^{\infty} \log(1+bu^{2})  \frac{d}{du} \left[ \left( \frac{d}{du} \right)^{2\ell} 
\left[ P_{2}(u,\ell) \tanh(\pi u ) \right]  \right] \, du.  \nonumber 
\end{multline}
\noindent
In order to integrate by parts, the boundary terms at $+ \infty$ need to be modified. Observe that 
\begin{eqnarray*}
\left( \frac{d}{du} \right)^{2\ell}  \left[ P_{2}(u,\ell) \tanh(\pi u) \right] 
& = & 
\sum_{j=0}^{2\ell} \binom{2\ell}{j} 
\left( \frac{d}{du} \right)^{j} \left[ \tanh(\pi u) \right]
\left( \frac{d}{du} \right)^{2\ell-j} \left[ P_{2}(u,\ell) \right] \\
& = &  (2\ell)!  \tanh(\pi u) +  \\
& & \quad \sum_{j=1}^{2\ell} \binom{2\ell}{j} 
\left( \frac{d}{du} \right)^{j} \left[ \tanh(\pi u) \right]
\left( \frac{d}{du} \right)^{2\ell-j} \left[ P_{2}(u,\ell) \right] 
\end{eqnarray*}
\noindent
The terms coming from derivatives of $\tanh(\pi u)$ in the second sum are polynomials in $\text{sech}^{2}u$, without a 
constant term. The 
terms coming from $P_{2}(u,\ell)$ are polynomials in $u$. It follows that the  whole second sum 
vanishes as $u \to  + \infty$. Then 
\begin{multline*}
\int_{0}^{\infty} \rho_{2\ell+1}(u) \log(1+bu^{2}) \, du     \\
  =  C_{\ell} 
\int_{0}^{\infty} \log(1+bu^{2})  \frac{d}{du} \left[ \left( \frac{d}{du} \right)^{2\ell} 
\left[ P_{2}(u,\ell) \tanh(\pi u ) - u^{2\ell}  \right]  \right] \, du.  \nonumber 
\end{multline*}
\noindent
and now integration by parts gives 
\begin{multline*}
\int_{0}^{\infty} \rho_{2\ell+1}(u) \log(1+bu^{2}) \, du     \\
  =  C_{\ell} 
\int_{0}^{\infty}   \left[ P_{2}(u,\ell) \tanh(\pi u) - u^{2\ell} \right] 
\left( \frac{d}{du} \right)^{2\ell+1} \log(1 + bu^{2})\, du.  \nonumber 
\end{multline*}
\noindent
Now use Theorem \ref{coro-nice1} to evaluate the integral on the left-hand side and Lemma \ref{lemma-61} to 
obtain the result.
\end{proof}

\section{Relations to the Hurwitz and Barnes zeta functions}
\label{sec-hurwitz}

This section expresses the densities $\rho_{\ell}(x)$ in terms of the Hurwitz zeta function. This is the used to produce the closed-form
evaluations of some integrals involving the  Hurwitz zeta function.

\begin{definition}
Let $N \in \mathbb{N}$ and $w, \, s \in \mathbb{C}$ with 
$\realpart{w} > 0, \, \realpart{s} > N$. The \textit{Barnes zeta function} is defined by the series
\begin{equation}
\zeta_{N}(s,w | a_{1}, \cdots, a_{N}) = \sum_{m_{1}, \cdots, m_{N} = 0}^{\infty}
(w + m_{1}a_{1}+ \cdots + m_{N}a_{N})^{-s}.
\end{equation}
\end{definition}

\noindent
This function was introduced in \cite{barnes-1904a} and contains, as the special case $N=1$ and 
$a_{1}=1$, the  \textit{Hurwitz zeta function} 
\begin{equation}
\zeta(s,w) = \sum_{n=0}^{\infty} \frac{1}{(n+w)^{s}}.
\end{equation}
\noindent 
A class of definite integrals connected to $\zeta(s,w)$ was described in 
\cite{espinosa-2002a,espinosa-2002b}.  In particular, the classical identity of Lerch \cite[entry 9.533.3]{gradshteyn-2015a}
\begin{equation}
\frac{d}{dz} \zeta(z,q) \left |_{z=0} \right.= \log \Gamma(q) - \log \sqrt{2 \pi}
\end{equation}
\noindent
gives  the classical evaluation 
\begin{equation}
\int_{0}^{1} \log \Gamma(z) \, dz = \log \sqrt{2 \pi}
\end{equation}
\noindent
given by L.~Euler, as well as 
\begin{multline}
\label{loggammasquared}
\int_{0}^{1} \log^{2} \Gamma(z) \, dz =  
 \frac{\gamma ^2}{12}  + \frac{\pi ^2}{48} + \frac{1}{3}\gamma \lp
+ \frac{4}{3}\left( {\lp } \right)^2 \\
- (\gamma  + 2 \lp )\frac{\zeta '(2)}{\pi ^2} +
\frac{\zeta ''(2)}{2\pi ^2 }.
\end{multline}
\noindent
The corresponding evaluations  for the integrals of $\log^{\ell}\Gamma(z)$, for $\ell =3, \, 4$ are  more complicated and they involve
 multiple-zeta values. In particular, the existence of formulas for $\ell\geq 5$, remains an open problem. See \cite{baileyd-2014a} for details. 
 
 \smallskip

The connection between the Hurwitz zeta function and the densities $\rho_{\ell}(x)$ is based on an integral representation 
of the Barnes zeta function given by S.~N.~M. Ruijsenaars \cite[p.~121]{ruijsenaars-2000a}. Introducing  the notation 
$\begin{displaystyle}A_{M} = \tfrac{1}{2}(a_{1} + \cdots + a_{M})  \end{displaystyle}$,  it is  shown in 
\cite{ruijsenaars-2000a} that if 
$\delta > -A_{M}$,  the Barnes zeta function has the integral representation 
\begin{equation}
\zeta_{M}(s,A_{M}+ \delta + i z | a_{1}, \cdots, a_{M}) = 
\frac{2^{1-M}}{\Gamma(s)} 
\int_{0}^{\infty} \frac{(2y)^{s-1} e^{-2 \delta y}}{\prod_{1 \leq j \leq M}  \sinh( a_{j}y)} e^{-2 i zy} dy
\end{equation}
\noindent
for  $ \realpart{s}>M$ and $\imagpart{z} < A_{M} + \delta$. Now choose 
 $n \in \mathbb{N}$ and consider the special case $\delta=0, \, M=\ell, \, s = \ell+1$ 
and $a_{j} = 1$ for $1 \leq j \leq M$. This yields the identity 
\begin{equation}
\zeta_{\ell}(\ell+1,\tfrac{\ell}{2}  + i z | 1, \cdots, 1) = 
\frac{2}{\ell!} 
\int_{0}^{\infty} e^{-2 i zy} \left( \frac{y}{\sinh y} \right)^{\ell} \, dy.
\end{equation}
\noindent
The next result gives a  new representation for the density $\rho_{\ell}(x)$ in terms of the  Barnes zeta function. The 
proof comes directly from \eqref{form-fn}.

\begin{proposition}
Let 
\begin{equation}
\zeta_{\ell}(m,z) = \zeta_{\ell}(m,z| 1,\cdots,1).
\end{equation}
\noindent
Then the density function $\rho_{\ell}(x)$ is given by
\begin{equation}
\rho_{\ell}(x) = \frac{\ell!}{2 \pi} \left( \zeta_{\ell}(\ell+1, \tfrac{\ell}{2} + i x) +  \zeta_{\ell}(\ell+1, \tfrac{\ell}{2} - i x) \right).
\label{hur-1}
\end{equation}
\end{proposition}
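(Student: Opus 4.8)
The plan is to obtain the representation \eqref{hur-1} by directly combining the Fourier-inversion formula \eqref{form-fn} for $\rho_\ell(x)$ with the Ruijsenaars integral representation for the Barnes zeta function specialized to $\delta=0$, $M=\ell$, $s=\ell+1$ and $a_j=1$. First I would recall the specialized identity derived just above the statement, namely
\begin{equation*}
\zeta_\ell(\ell+1,\tfrac{\ell}{2}+iz) = \frac{2}{\ell!}\int_0^\infty e^{-2izy}\left(\frac{y}{\sinh y}\right)^\ell\,dy,
\end{equation*}
valid for $\imagpart z < \tfrac{\ell}{2}$, in particular for real $z=x$. Replacing $z$ by $-x$ gives the companion identity with $e^{2ixy}$ in place of $e^{-2ixy}$. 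Adding the two and observing that the integrand $\left(y/\sinh y\right)^\ell$ is even in $y$, so that $\int_0^\infty(\cdots)(e^{2ixy}+e^{-2ixy})\,dy = \int_{-\infty}^\infty(\cdots)e^{2ixy}\,dy$, yields
\begin{equation*}
\zeta_\ell(\ell+1,\tfrac{\ell}{2}+ix)+\zeta_\ell(\ell+1,\tfrac{\ell}{2}-ix) = \frac{2}{\ell!}\int_{-\infty}^\infty\left(\frac{y}{\sinh y}\right)^\ell e^{2ixy}\,dy.
\end{equation*}

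Next I would invoke \eqref{form-fn}, which asserts
\begin{equation*}
\rho_\ell(x) = \frac{1}{\pi}\int_{-\infty}^\infty\left(\frac{y}{\sinh y}\right)^\ell e^{2ixy}\,dy,
\end{equation*}
so that the integral on the right-hand side of the previous display equals $\pi\rho_\ell(x)$. Substituting this back gives
\begin{equation*}
\zeta_\ell(\ell+1,\tfrac{\ell}{2}+ix)+\zeta_\ell(\ell+1,\tfrac{\ell}{2}-ix) = \frac{2\pi}{\ell!}\rho_\ell(x),
\end{equation*}
which rearranges to \eqref{hur-1}. This completes the argument.

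The only genuine subtlety, and the step I expect to require the most care, is the justification of the manipulation that folds the half-line integrals into a full-line integral and the interchange implicit in equating the Fourier integral with $\pi\rho_\ell(x)$: one must know that $\left(y/\sinh y\right)^\ell$ is integrable on $\mathbb{R}$ (it decays exponentially, so this is immediate) and that the Ruijsenaars formula is valid at the boundary value $\delta=0$ with $z$ real, i.e.\ that the restriction $\imagpart z < A_M+\delta = \tfrac{\ell}{2}$ indeed accommodates real $x$; since $\imagpart x = 0 < \tfrac{\ell}{2}$ for every $\ell\in\mathbb{N}$, this holds. Beyond this, the proof is the purely formal identification announced in the text ("The proof comes directly from \eqref{form-fn}"), so no further obstacles arise.
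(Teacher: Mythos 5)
Your argument is correct and is exactly the paper's intended proof: specialize the Ruijsenaars representation to $\delta=0$, $s=\ell+1$, $a_j=1$, add the two values at $\tfrac{\ell}{2}\pm ix$, use evenness of $(y/\sinh y)^{\ell}$ to fold the half-line integrals into the full-line Fourier integral, and identify it with $\pi\rho_\ell(x)$ via \eqref{form-fn}. Your remark on the validity range $\imagpart z < \tfrac{\ell}{2}$ (satisfied for real $x$) is the only point of care, and you handle it correctly.
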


The next representation for the densities $\rho_{\ell}(x)$ comes from a result of J.~Choi \cite[Equation (2.5)]{choijy-1996a}, which 
expresses $\zeta_{\ell}(s,w)$ as a finite linear combination of the Hurwitz zeta function, in the form 
\begin{equation}
\zeta_{\ell}(s,w) = \sum_{j=0}^{\ell-1} p_{\ell,j}(w) \zeta(s-j,w) 
\end{equation}
\noindent
where 
\begin{equation}
p_{\ell,j}(w) = \frac{(-1)^{\ell+1-j}}{(\ell-1)!} \sum_{m=j}^{\ell-1} \binom{m}{j} s(\ell,m+1) w^{m-j},
\end{equation}
\noindent
where $s(\ell,m)$ is the Stirling number of the first kind. Then \eqref{hur-1} leads to 
\begin{multline}
\rho_{\ell}(x)  =  \frac{\ell (-1)^{\ell+1}}{2 \pi} \sum_{j=0}^{\ell-1} (-1)^{j} \sum_{m=j}^{\ell-1} \binom{m}{j} s(\ell,m+1) \\
   \times \left\{ \left( \frac{\ell}{2} + i x \right)^{m-j} \zeta\left( \ell+1-j, \frac{\ell}{2} + i x \right) + 
    \left( \frac{\ell}{2} - i x \right)^{m-j} \zeta\left( \ell+1-j, \frac{\ell}{2} - i x \right)  \right\}.
    \end{multline}
    
It follows that the logarithmic moment can be expressed as 
\begin{multline*}
\int_{0}^{\infty} \rho_{2\ell}(u) \log \left( 1 + \frac{u^{2}}{(x-\ell)^{2}} \right) \, du =  \\
 \frac{2\ell}{\pi} \sum_{j=0}^{\ell-1} (-1)^{j-1} \sum_{m=j}^{2\ell-1} s(2\ell,m+1) \\
  \times   \realpart{ \int_{0}^{\infty} \left\{ (\ell+ i u)^{m-j} \zeta(2\ell+1-j,\ell+ i u )  \right\}  }
 \log \left( 1 + \frac{u^{2}}{(x-\ell)^{2}} \right) \, du. 
 \end{multline*}
    
Now replace $\ell$ by $2\ell$ in \eqref{eval-int} and Theorem \ref{eval-mess00}, equate their right-hand sides, 
and use the above identity to arrive at first of the following two identities. The second one is similarly proved.


\begin{theorem}
Let $\zeta(s,w)$ denote the Hurwitz zeta function and $s(\ell,m)$ the Stirling numbers of the first kind. Define 
\begin{equation*}
z_{\ell}(m,j)(x) = 2 \realpart \int_{0}^{\infty} (\ell + i u)^{m-j} \zeta(2\ell+1-j,\ell + i u) 
 \log \left( 1 + \frac{u^{2}}{(x-\ell)^{2}} \right) \, du
 \end{equation*}
 \noindent
 and 
 \begin{equation*}
Z_{\ell}(m,j)(x) = 2 \realpart \int_{0}^{\infty} (\ell + \tfrac{1}{2} + i u)^{m-j} \zeta(2\ell+2-j,\ell +  \tfrac{1}{2} + i u) 
 \log \left( 1 + \frac{u^{2}}{(x-\ell-\tfrac{1}{2})^{2}} \right) \, du
 \end{equation*}
 \noindent
 Then, for $x > \ell$, 
 \begin{multline*}
 \sum_{j=0}^{2\ell-1} (-1)^{j-1} \sum_{m=j}^{2\ell-1} \binom{m}{j}  s(2\ell,m+1) z_{\ell}(m,j)(x) =  \\
 - \frac{\pi}{\ell} \left( \log(x-\ell) + H_{2\ell-1} - 
 \frac{d^{2\ell-1}}{dx^{2\ell-1}} \left\{ \binom{x-1}{2\ell-1} \psi(x-\ell) \right\} \right),
 \end{multline*}
 \noindent
 and  $x > \ell+\tfrac{1}{2}$,
 \begin{multline*}
 \sum_{j=0}^{2\ell} (-1)^{j} \sum_{m=j}^{2\ell} \binom{m}{j}  s(2\ell+1,m+1) Z_{\ell}(m,j)(x) =  \\
 - \frac{2\pi}{2\ell+1} \left( \log \left( x-\ell - \tfrac{1}{2} \right)  + H_{2\ell} - 
 \frac{d^{2\ell}}{dx^{2\ell}} \left\{ \binom{x-1}{2\ell} \psi \left( x-\ell- \tfrac{1}{2}) \right) \right\} \right).
 \end{multline*}
 \end{theorem}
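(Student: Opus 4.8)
The plan is to obtain both identities by equating two independent closed forms for the logarithmic moment $\int_{0}^{\infty}\log(1+bu^{2})\rho_{\ell}(u)\,du$ and then specializing $\ell$ to $2\ell$ and $2\ell+1$. One closed form is the exact evaluation obtained by combining \eqref{eval-int} with Theorem \ref{eval-mess00}; the other is the Hurwitz--zeta series for $\rho_{\ell}$ built from \eqref{hur-1} and Choi's decomposition. The first identity is the even case, the second the odd case, and the two are carried out in parallel.

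First I would take \eqref{eval-int} (valid because $\rho_{\ell}$ is even) with $\ell\mapsto 2\ell$, so that $b=(x-\ell)^{-2}$ and $\lfloor 2\ell/2\rfloor=\ell$, and equate its right side with that of Theorem \ref{eval-mess00} under the same substitution. For $x>\ell$ one has $|x-\ell|=x-\ell$, and solving for the integral gives
\[
\int_{0}^{\infty}\log\!\Big(1+\tfrac{u^{2}}{(x-\ell)^{2}}\Big)\rho_{2\ell}(u)\,du=-\log(x-\ell)-H_{2\ell-1}+\frac{d^{2\ell-1}}{dx^{2\ell-1}}\Big\{\binom{x-1}{2\ell-1}\psi(x-\ell)\Big\}.
\]
Next I would substitute into the left side the expansion of $\rho_{2\ell}$ coming from \eqref{hur-1} together with Choi's formula, noting that the summands $(\ell+iu)^{m-j}\zeta(2\ell+1-j,\ell+iu)$ and $(\ell-iu)^{m-j}\zeta(2\ell+1-j,\ell-iu)$ are complex conjugates, so the bracket in \eqref{hur-1} equals $2\realpart$ of the first. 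Since the $(j,m)$-sum is finite one may interchange it with the integral; recognizing the definition of $z_{\ell}(m,j)(x)$ and carrying along the sign $(-1)^{2\ell+1}=-1$ from the prefactor $\ell(-1)^{\ell+1}/(2\pi)$ turns the left side into $\frac{\ell}{\pi}\sum_{j}(-1)^{j-1}\sum_{m}\binom{m}{j}s(2\ell,m+1)z_{\ell}(m,j)(x)$. Equating and multiplying by $\pi/\ell$ yields the first identity.

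The second identity is the same computation with $\ell\mapsto 2\ell+1$: now $b=(x-\ell-\tfrac12)^{-2}$, $\lfloor (2\ell+1)/2\rfloor=\ell$, the half-integer shift $x-\ell-\tfrac12$ replaces $x-\ell$ throughout, the Hurwitz argument becomes $\ell+\tfrac12\pm iu$ with $\zeta(2\ell+2-j,\cdot)$, and the overall sign flips because $(-1)^{(2\ell+1)+1}=+1$ --- which is precisely why the second identity carries $(-1)^{j}$ rather than $(-1)^{j-1}$. Matching with the definition of $Z_{\ell}(m,j)(x)$ and multiplying by $2\pi/(2\ell+1)$ gives the result for $x>\ell+\tfrac12$.

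The bulk of the argument is bookkeeping, and the one step deserving genuine care is the term-by-term integration: the full density $\rho_{2\ell}(u)$ decays exponentially, but its individual Hurwitz--zeta constituents decay only polynomially, so I would check via the large-argument asymptotics $\zeta(s,\ell+iu)=O\big((\ell+iu)^{1-s}\big)$ that after taking real parts each summand of the integrand is $O(u^{-2}\log u)$ as $u\to\infty$, which makes splitting the integral against the logarithmic weight legitimate; finiteness at $u=0$ is immediate since the densities are finite there. The remaining routine verifications --- the conjugate symmetry producing the $\realpart$, and tracking the floor functions and the $(-1)^{\ell+1}$ sign through the two parity cases --- then complete the proof.
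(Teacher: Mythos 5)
Your proposal is correct and follows essentially the same route as the paper: expand $\rho_{2\ell}$ (resp.\ $\rho_{2\ell+1}$) through the Barnes-zeta representation \eqref{hur-1} together with Choi's formula, integrate term by term against the logarithmic weight, and equate the result with the closed form obtained by combining \eqref{eval-int} with Theorem \ref{eval-mess00}; your check of the polynomial decay of the individual Hurwitz-zeta terms is a justification the paper simply omits. One small caution: in the odd case Theorem \ref{eval-mess00} gives the digamma argument $x-\ell$ (since $\lfloor(2\ell+1)/2\rfloor=\ell$), so your phrase ``$x-\ell-\tfrac12$ replaces $x-\ell$ throughout'' should not be applied to the argument of $\psi$ — the same subtlety the paper's ``similarly proved'' glosses over.
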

 
 Inverting these systems of equations to obtain expressions for $Y_{n}(m,j)$ and $Z_{n}(m,j)$ is an 
 open problem.
 
\medskip

\noindent
\textbf{Acknowledgments}.  The authors wish to thank Larry Glasser for discussions on this project. Partial 
support for the work of the third author comes from NSF-DMS 1112656. The first author is a post-doctoral fellow, funded 
in part by the same grant. The work of the C.~Vignat was partially supported by the iCODE Institute, 
Research Project of the Idex Paris-Saclay.

%

\end{document}